\documentclass[12pt]{amsart}%amsart}
\usepackage{a4wide}
\usepackage{amssymb}
\usepackage{amsthm}
\usepackage{amsmath}
\usepackage{amscd}
\usepackage{verbatim}
\usepackage{color}
\usepackage[mathscr]{eucal}

\newtheorem{theorem}{Theorem}
\newtheorem{lemma}[theorem]{Lemma}

\newtheorem{proposition}[theorem]{Proposition}

\theoremstyle{remark}

\newtheorem*{remark}{Remark}
\newtheorem*{remarks}{Remarks}

\numberwithin{theorem}{section} \numberwithin{equation}{section}

\newcommand{\R}{\mathbb{R}}
\newcommand{\C}{\mathbb{C}}

\newcommand{\Z}{\mathbb{Z}}
\newcommand{\N}{\mathbb{N}}
\newcommand{\SL}{{\text {\rm SL}}}

\renewcommand{\H}{\mathbb{H}}
\newcommand{\sgn}{\operatorname{sgn}}
\newcommand{\erf}{\operatorname{erf}}

\newcommand{\fkD}{f_{D}}

\newcommand{\QD}{\mathcal{Q}_D}

\newcommand{\re}{\text{Re}}
\newcommand{\im}{\text{Im}}
\newcommand{\Th}{\Theta_1}
\newcommand{\Thstr}{\Theta_2}
\newcommand{\spc}{\mathfrak{M}}
\begin{document}

\title[A generating function of locally harmonic Maass forms]{On a completed generating function of locally harmonic Maass forms}
\author{Kathrin Bringmann}
\email{kbringma@math.uni-koeln.de}
\address{Mathematical Institute\\University of Cologne\\ Weyertal 86-90 \\ 50931 Cologne \\Germany}
\author{Ben Kane}
\email{bkane@math.uni-koeln.de}
\address{Mathematical Institute\\University of Cologne\\ Weyertal 86-90 \\ 50931 Cologne \\Germany}
\author{Sander Zwegers}
\email{szwegers@math.uni-koeln.de}
\address{Mathematical Institute\\University of Cologne\\ Weyertal 86-90 \\ 50931 Cologne \\Germany}
\thanks{The research of the first author was supported by the Alfried Krupp Prize for Young University Teachers of the Krupp Foundation.}
\subjclass[2010] {11F27, 11F25, 11F37, 11E16, 11F11}
\keywords{locally harmonic Maass forms, indefinite theta functions, holomorphic projection, mock modular forms, modular forms}
\begin{abstract}
While investigating the Doi--Naganuma lift, Zagier defined integral weight cusp forms $f_D$ which are naturally defined in terms of binary quadratic forms of discriminant $D$.  It was later determined by Kohnen and Zagier that the generating function for the $f_D$ is a half-integral weight cusp form.  A natural preimage of $f_D$ under a differential operator at the heart of the theory of harmonic weak Maass forms was determined by the first two authors and Kohnen.  In this paper, we consider the modularity properties of the generating function of these preimages.  We prove that although the generating function is not itself modular, it can be naturally completed to obtain a half-integral weight modular object.
\end{abstract}
\date{\today}
\maketitle

\section{Introduction and statement of results}
\noindent
Throughout we let $k\geq 4$ be an even integer.  While investigating the Doi--Naganuma lift \cite{DoiNaganuma}, Zagier \cite{ZagierDN} defined for $D>0$ the function
$$
f_{D}(\tau):= f_{k,D}( \tau):= D^{k-\frac{1}{2}}\sum_{Q\in \QD}\frac{1}{Q\left(\tau,1\right)^k} \qquad (\tau\in \H),
$$
where $\QD$ is the set of integral binary quadratic forms
$$
[a,b,c](X,Y):=aX^2+bXY+cY^2
$$
of discriminant $D\in\Z$.  Note that $f_D$ has been renormalized from Zagier's original definition.  The function $f_{D}$ is a weight $2k$ cusp form, while the generating function 
$$
\Omega(\tau,z):=\sum_{D>0} f_{D}(\tau) e^{2\pi i Dz} \qquad (z\in\H)
$$
for the $f_D$ is a modular form of weight $k+\frac{1}{2}$ in the $z$ variable \cite{KohnenZagier}.  As was shown by the first two authors and Kohnen in \cite{BKW}, the functions $f_D$ have natural weight $2-2k$ preimages $\mathcal{F}_D=\mathcal{F}_{1-k,D}$ under the operator $\xi_{2-2k}:=2iy^{2-2k}\overline{\frac{\partial}{\partial \overline{\tau}}}$, which is central in the theory of harmonic weak Maass forms.  In this paper we investigate the modularity properties of the generating function
$$
\Psi\left(\tau,z\right):=\sum\limits_{D>0}\mathcal{F}_{D}\left(\tau\right)e^{2\pi i Dz}.
$$
In \cite{BKW}, it was shown that $\mathcal{F}_D$ exhibits discontinuities along the geodesics defined by 
$$
E_D:=\left\{ \tau=x+iy\in \H: \exists a,b,c\in \Z,\ b^2-4ac=D,\ a\left|\tau\right|^2+bx+c=0\right\}.
$$
Hence, as a function of $\tau$, the set of discontinuities of $\Psi$ is dense in the upper half plane.  Nonetheless, the function is still well defined at each point $\tau\in\H$ and we investigate the modularity property of $\Psi$ as a function of $z$ whenever $\tau$ is fixed.  However, unlike in the case of $\Omega$, $\Psi$ is not itself modular, but may be naturally completed to a function which is modular of weight $\frac{3}{2}-k$ as a function of $z$.  This mirrors the mock theta functions of Ramanujan, which are themselves holomorphic but may be completed to nonholomorphic modular objects called harmonic weak Maass forms.  The mock theta functions are in a class of functions called \begin{it}mock modular forms\end{it}, which have naturally appeared in a variety of applications.  Their benefit has been observed in the areas of partition theory (for example \cite{AndrewsMock,Bringmann,BGM,BOfq,BORank}), Zagier's duality \cite{ZaDual} (for example \cite{BODual}), and derivatives of $L$-functions (for example \cite{BrO,BrY}).  To give another example, they have also recently appeared in Eguchi, Ooguri, and Tachikawa's \cite{EOT} investigation of moonshine
 for the largest Mathieu group $M_{24}$.  For a good overview of mock modular forms, see \cite{OnoUnearth} and \cite{ZagierBourbaki}.

We now return to the properties of the functions $\mathcal{F}_{D}$.  In addition to being natural preimages of the functions $\fkD$, the $\mathcal{F}_D$ are furthermore \begin{it}locally harmonic Maass forms.\end{it}  Such functions satisfy weight $2-2k$ modularity and are annihilated (away from a certain set of measure zero) by the weight $2-2k$ hyperbolic Laplacian
$$
\Delta_{2-2k}:= -y^2\left( \frac{\partial^2}{\partial x^2}+\frac{\partial^2}{\partial y^2}\right) + i\left(2-2k\right) y\left(\frac{\partial}{\partial x}+i \frac{\partial}{\partial y}\right)\qquad (\tau=x+iy).
$$
Denoting for $Q=[a,b,c]\in \QD$
$$
Q_{\tau}:=\frac{1}{y}\left(a|\tau|^2+bx+c\right),
$$
the functions $\mathcal{F}_D$ are explicitly defined by
$$
\mathcal{F}_{D}\left(\tau\right):=\frac{2}{\beta\left(k-\frac{1}{2},\frac{1}{2}\right)}\sum_{Q\in \QD}\sgn\left(Q_{\tau}\right) Q\left(\tau,1\right)^{k-1} \psi_k\left(\frac{Dy^2}{\left|Q\left(\tau,1\right)\right|^2_{\phantom{-}}}\right).
$$
Here we use the convention $\sgn(0)=0$ and 
$$
\psi_k\left(v\right):=\frac{1}{2}\beta\left(v;k-\frac{1}{2},\frac{1}{2}\right)
$$
is a  special value of the incomplete $\beta$-function, which is given for $s,w\in \C$ satisfying $\re\left(s\right)$, $\re\left(w\right)>0$ by
$$
\beta\left(v;s,w\right):=\int_{0}^v t^{s-1}\left(1-t\right)^{w-1}dt.
$$
  Moreover, for $\re(s),\re(w)>0$, we denote $\beta(s,w):=\beta(1;s,w)$.  Note that we have renormalized the definition of $\mathcal{F}_D$ given in \cite{BKW}.

To complete $\Psi$, we define for $D\in \Z$
\begin{equation}\label{eqn:GDdef}
\mathcal{G}_{D}\left(v;\tau\right):=-\frac{1}{\sqrt{\pi}}\sum\limits_{Q\in \QD}\sgn\left(Q_{\tau}\right)Q\left(\tau,1\right)^{k-1} \Gamma\left(\frac{1}{2};4\pi Q_{\tau}^2 v\right) \qquad (z=u+iv),
\end{equation}
where
$$
\Gamma(s;w):=\int_{w}^{\infty} t^{s-1}e^{-t}dt\qquad (w>0,\ s\in \C)
$$
is the incomplete gamma function.   We denote the generating function for the $\mathcal{G}_D$ by
$$
\Psi^*\left(\tau,z\right):=\sum_{D\in \Z} \mathcal{G}_D\left(v;\tau\right)e^{2\pi i Dz}.
$$
We then define the completion of $\Psi$ by
\begin{equation}\label{eqn:Thetakdef}
\widehat{\Psi}(\tau,z):=\Psi(\tau,z)+\Psi^*\left(\tau,z\right).
\end{equation}
Note that, as a function of $\tau$, the function $-\mathcal{G}_D$ exhibits the same singularities as $\mathcal{F}_D$, and hence the singularities vanish when summing them together.  As a result, the function $\widehat{\Psi}$ is real analytic in both variables.  To state the modularity properties of $\widehat{\Psi}$, we set for $\kappa\in \frac{1}{2}\Z$
$$
\Gamma:=\begin{cases}
\SL_2(\Z)&\text{if }\kappa\in \Z,\\
\Gamma_0(4)&\text{if }\kappa\in \frac{1}{2}\Z\setminus\Z.
\end{cases}
$$
Let $\spc_{\kappa}$ denote the space of real analytic functions $f:\H\to \C$ satisfying weight $\kappa\in \frac{1}{2}\Z$ modularity for $\Gamma,$ with the additional restriction that $f$ is in Kohnen's plus space whenever $\kappa\in \frac{1}{2}\Z\setminus\Z$.  For a formal definition of Kohnen's plus space, see the comments preceding Lemma \ref{lem:lowermodular}.
\begin{theorem}\label{thm:Thetamodular}
As a function of $z$, $\widehat{\Psi}(\tau,z)$ is an element of $\spc_{k+\frac{1}{2}}$, while as a function of $\tau$ it is an element of $\spc_{2-2k}$.
\end{theorem}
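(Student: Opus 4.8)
The plan is to establish the two modularity statements separately, treating the $z$-variable and the $\tau$-variable more or less symmetrically, since $\widehat\Psi$ is built from the indefinite theta-like kernel $\sgn(Q_\tau)Q(\tau,1)^{k-1}$ weighted against a hypergeometric/incomplete-gamma factor. For the $z$-variable I would first observe that, after unfolding the sum over $\QD$ according to discriminant, the completed generating function $\widehat\Psi(\tau,z)=\Psi(\tau,z)+\Psi^*(\tau,z)$ is (up to normalization) a single sum over all integral binary quadratic forms $Q=[a,b,c]$ of the quantity
$$
\sgn(Q_\tau)\,Q(\tau,1)^{k-1}\,\bigl[\text{(holomorphic piece in $z$)}+\text{(incomplete-$\Gamma$ completion in $z$)}\bigr]\,e^{2\pi i (b^2-4ac)z},
$$
so that the $z$-dependence is governed by an indefinite theta function attached to a quadratic form of signature $(1,2)$ (the discriminant form $b^2-4ac$ on the lattice of $[a,b,c]$'s). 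The key step is then to recognize the bracketed factor as exactly Zwegers' sign-plus-error-function completion $E(\cdot)=\sgn(\cdot)-\operatorname{erf}(\cdot)$-type construction that makes an indefinite theta function modular; concretely, one rewrites $\Gamma(\tfrac12;4\pi Q_\tau^2 v)/\sqrt\pi=1-\operatorname{erf}(2\sqrt{\pi v}\,|Q_\tau|)$ and checks that $\sgn(Q_\tau)-\sgn(Q_\tau)\operatorname{erf}(2\sqrt{\pi v}\,|Q_\tau|)$ together with the $\psi_k$-factor in $\mathcal F_D$ assembles the incomplete-beta/incomplete-gamma pair into a smooth function decaying along the "light cone". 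I would then invoke Zwegers' transformation theory for indefinite theta functions (or a weighted higher-depth version of it, handling the polynomial factor $Q(\tau,1)^{k-1}$ via a theta-with-spherical-polynomial argument) to get weight $k+\tfrac12$ modularity on $\Gamma_0(4)$, and a standard check of the coefficient support $D\equiv 0,1\pmod 4$ to land in Kohnen's plus space, i.e. in $\spc_{k+1/2}$.

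For the $\tau$-variable the strategy is the same kernel viewed the other way: the function $\tau\mapsto\sgn(Q_\tau)Q(\tau,1)^{k-1}\psi_k(Dy^2/|Q(\tau,1)|^2)$ is, for each fixed $D$, precisely the locally harmonic Maass form $\mathcal F_D$ up to constants, which is already known from \cite{BKW} to be weight $2-2k$ modular for $\SL_2(\Z)$; and the completion term $\mathcal G_D(v;\tau)$ is built from the same $\sgn(Q_\tau)Q(\tau,1)^{k-1}$ times a function of $Q_\tau^2 v$. The point to verify is that adding the nonholomorphic-in-$z$ piece $\Psi^*$ does not spoil the weight $2-2k$ transformation in $\tau$: since the extra factor $\Gamma(\tfrac12;4\pi Q_\tau^2 v)$ depends on $\tau$ only through the $\SL_2(\Z)$-equivariant combination $Q_\tau$ (which satisfies $(\gamma Q)_{\gamma\tau}=Q_\tau$ under the simultaneous action $Q\mapsto\gamma^{-1}Q$, $\tau\mapsto\gamma\tau$), the substitution $\tau\mapsto\gamma\tau$ just permutes the quadratic forms of discriminant $D$ while producing the automorphy factor $(c\tau+d)^{2k-2}$ from $Q(\tau,1)^{k-1}$ exactly as in the holomorphic case. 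So $\mathcal G_D(v;\cdot)\in\spc_{2-2k}$ for each $D$, hence so is the generating function $\widehat\Psi(\cdot,z)$, provided convergence/termwise transformation is justified.

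The main obstacle I anticipate is \emph{convergence and the exchange of summation}, both over $\QD$ for fixed $D$ and over $D$: the holomorphic part $\Psi$ is only conditionally convergent (the $\mathcal F_D$ already require a careful order of summation, cf. the sign factor), and the completed indefinite theta series converges only because of the error-function decay transverse to the light cone — so I would need to set up the double sum over all of $\{[a,b,c]\}$ with a fixed, symmetric summation order (e.g. by $|a|+|b|+|c|$ or by the Gaussian weight coming from $v>0$), prove absolute convergence of $\widehat\Psi$ in that order, and only then split off $\Psi$ and $\Psi^*$. A secondary technical point is the polynomial weight $Q(\tau,1)^{k-1}$: indefinite theta functions with polynomial coefficients require the polynomial to be "harmonic" with respect to the quadratic form (in Vignéras' sense) for the modular transformation to go through with the expected weight; I would need to check that $Q(\tau,1)^{k-1}$, viewed as a polynomial in $(a,b,c)$ with $\tau$ as a parameter, is the right homogeneous harmonic of degree $k-1$ so that Vignéras'/Zwegers' theorem yields the weight shift from $\tfrac12$ to $k+\tfrac12$ in $z$ and correspondingly the weight $2-2k$ in $\tau$. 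Granting the convergence bookkeeping and this Vignéras-harmonicity check, the modularity in each variable follows by citing the indefinite theta transformation formula and reading off the automorphy factor, and the plus-space membership follows from the congruence condition on $D$.
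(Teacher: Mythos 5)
Your treatment of the $\tau$-variable is essentially the paper's: the substitution $\tau\mapsto\gamma\tau$ permutes $\QD$ and produces the automorphy factor term by term (the paper checks this on the generators by explicit changes of variables in $(a,b,c)$ in the proof of Lemma \ref{lem:lowermodular}), and the convergence issues you correctly flag are handled in Proposition \ref{prop:converge} and Lemma \ref{lem:rewrite}. The one point you omit on that side is real-analyticity in $\tau$, which is required for membership in $\spc_{2-2k}$ and is not obvious since the individual $\mathcal{F}_D$ jump across geodesics; the paper obtains it from the decomposition $\widehat{\Psi}=\Psi_1+\Psi_2$.

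The genuine gap is in the $z$-variable. Your plan is to recognize $\widehat{\Psi}$ as a Zwegers/Vign\'eras-type completed indefinite theta function and read off the transformation law, but $\widehat{\Psi}$ is not of that form. Writing out the $D$-th coefficient, the incomplete beta factor $\psi_k\bigl(Dy^2/|Q(\tau,1)|^2\bigr)$ in $\mathcal{F}_D$ becomes, via Lemma \ref{gkLemma}, the term $g_k\bigl(Q_{\tau}/\sqrt{D}\bigr)$ appearing in $\Psi_2$: this depends on the lattice point through $Q_{\tau}/\sqrt{q(w)}$ rather than through $w\sqrt{v}$, has no $v$-dependence at all, and is only defined for $D>0$. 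No Vign\'eras kernel $p(w\sqrt{v})$ produces such a summand, and the would-be kernel also fails the eigenvalue equation distributionally across the hyperplane $Q_{\tau_0}=0$, where $\sgn(Q_{\tau_0})$ jumps but $Q(\tau,1)^{k-1}$ does not vanish. This is exactly why $\Psi$ is mock: the completion $\Psi^*$ does not convert it into a theta series, so "invoking the indefinite theta transformation formula" has nothing to apply to. The paper's argument is necessarily indirect and rests on two ideas absent from your proposal: first, $L_z\widehat{\Psi}=\Th$ is modular (Theorem \ref{thm:Thetaops} and Lemma \ref{lem:lowermodular}), so the modular defect $\widehat{\Psi}|_{k+\frac{1}{2}}M-\widehat{\Psi}$ is annihilated by $L_z$ and hence holomorphic; second, $\pi_{k+\frac{1}{2}}\bigl(\widehat{\Psi}\bigr)=0$ (Lemma \ref{lem:Thetakhp}, via the computation $\pi_{k+\frac{1}{2}}(\Psi_1)=-\Psi_2$), and since holomorphic projection commutes with the slash operator and fixes holomorphic functions, the holomorphic defect equals $\pi_{k+\frac{1}{2}}\bigl(\widehat{\Psi}|_{k+\frac{1}{2}}M-\widehat{\Psi}\bigr)=0$. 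Without this holomorphic-projection step, or some substitute mechanism for killing a holomorphic modular defect, the $z$-modularity does not follow from your outline.
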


\begin{remarks}

\noindent
\begin{enumerate}
\item
 
One can show that, as a function of $z$, $\widehat{\Psi}$ satisfies the growth conditions of a cusp form, i.e., $v^{\frac{k}{2}+\frac{1}{4}}|\widehat{\Psi}(\tau,z)|$ is bounded on $\H$.  However, the corresponding growth condition in $\tau$ is not satisfied by $\widehat{\Psi}$.
\item
Since the functions $\mathcal{F}_D$ exhibit discontinuities along certain geodesics, it is somewhat surprising that $\widehat{\Psi}$ is real analytic in $\tau$.
\end{enumerate}
\end{remarks}
The function $\widehat{\Psi}$ is furthermore naturally connected to $\Omega$ and indefinite theta functions through the weight lowering operator $L_{w}:=\im(w)^2\frac{\partial}{\partial\overline{w}},$ which sends functions satisfying weight $\kappa$ modularity to functions which satisfy weight $\kappa-2$ modularity.  The theta functions we require are
$$
\Th\left(\tau,z\right):=iv^{\frac{3}{2}}\sum_{\substack{D\in \Z\\ Q\in \QD}} Q\left(\tau,1\right)^{k-1} Q_{\tau} e^{-4\pi Q_{\tau}^2 v} e^{2\pi i Dz}
$$
and (the projection into Kohnen's plus space of) Shintani's \cite{Shintani} (non-holomorphic) classical theta kernel
$$
\Thstr\left(\tau,z\right):=2iv^{\frac{1}{2}}y^{-2k}\sum_{\substack{D\in \Z\\ Q\in \QD}}Q(\tau,1)^k e^{-4\pi Q_{\tau}^2 v}e^{2\pi i Dz}.
$$
Although these two theta functions have known modularity properties, we supply direct proofs of this modularity in Lemma \ref{lem:lowermodular} as a convenience to the reader.  Specifically, the function $\Th\left(\tau,z\right)$ is a weight $k-\frac{3}{2}$ indefinite theta function for $\Gamma_0(4)$ in Kohnen's plus space in $z$ and satisfies weight $2-2k$ modularity for $\SL_2(\Z)$ in $\tau$.  The function $\Thstr\left(-\overline{\tau},z\right)$ is a weight $k+\frac{1}{2}$ indefinite theta function for $\Gamma_0(4)$ in Kohnen's plus space in $z$ and satisfies weight $2k$ modularity for $\SL_2(\Z)$ in $\tau$.
\begin{theorem}\label{thm:Thetaops}
\noindent

\noindent
\begin{enumerate}
\item
The image of the function $\widehat{\Psi}$ under the lowering operator in $z$ equals
\begin{equation}\label{eqn:lowerz}
L_z\left( \widehat{\Psi}\left(\tau,z\right)\right) = \Th\left(\tau,z\right).
\end{equation}
\item
The image of the function $\widehat{\Psi}$ under the $\xi$-operator in $\tau$ equals
\begin{equation}\label{eqn:lowertau1}
\xi_{2-2k,\tau}\left(\widehat{\Psi}(\tau,z)\right)= 2i\left(\Theta_2(-\overline{\tau},-\overline{z}) -\frac{i}{\beta\left(k-\frac{1}{2},\frac{1}{2}\right)}\Omega_k\left(\tau,-\overline{z}\right)\right)
%\frac{-2i}{\beta\left(k-\frac{1}{2},\frac{1}{2}\right)} \Omega_k\left(\tau,-\overline{z}\right)-\Thstr\left(-\overline{\tau},-\overline{z}\right).
\end{equation}
\end{enumerate}
\end{theorem}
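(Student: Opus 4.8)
Our plan is to prove both identities by applying the relevant lowering operator term by term to $\Psi$ and $\Psi^{*}$ (recall $\widehat{\Psi}=\Psi+\Psi^{*}$) and recognizing the resulting series. Throughout we use two elementary identities, valid for every $Q\in\QD$: $|Q(\tau,1)|^{2}=y^{2}(Q_{\tau}^{2}+D)$ and $\frac{\partial}{\partial\overline{\tau}}Q_{\tau}=\frac{Q(\tau,1)}{2iy^{2}}$, both obtained by writing $Q_{\tau}$ and $Q(\tau,1)$ out in terms of $\tau$ and $\overline{\tau}$. The first rewrites $\psi_{k}(Dy^{2}/|Q(\tau,1)|^{2})=\psi_{k}(D/(Q_{\tau}^{2}+D))$ and, together with $\Gamma(\tfrac12)=\sqrt{\pi}$ and $\psi_{k}(1)=\tfrac12\beta(k-\tfrac12,\tfrac12)$, shows that across a geodesic $Q_{\tau}=0$ the jump of $\mathcal{F}_{D}$ is exactly cancelled by the jump of $\mathcal{G}_{D}$ (for $D\le 0$ no such geodesics occur, since $Q_{\tau}=0$ would force $|Q(\tau,1)|^{2}=Dy^{2}$); this is the mechanism behind the real analyticity of $\widehat{\Psi}$ in $\tau$ recorded in Theorem~\ref{thm:Thetamodular}. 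Consequently the distributional parts of $\frac{\partial}{\partial\overline{\tau}}\Psi$ and $\frac{\partial}{\partial\overline{\tau}}\Psi^{*}$ supported on these geodesics cancel, so it suffices to carry out the $\tau$-computation on the complement of the geodesics and extend the resulting identity by continuity, the right-hand side of \eqref{eqn:lowertau1} being real analytic. Since $k\ge 4$, the defining series and all their derivatives converge locally uniformly, as do the $D$-sums (for $\Psi^{*}$ one uses that, when $D<0$, the inequality $Q_{\tau}^{2}\ge|D|$ gives $\mathcal{G}_{D}(v;\tau)=O(e^{-4\pi|D|v})$, which outweighs the growth $|e^{2\pi i Dz}|=e^{2\pi|D|v}$), so these term-by-term manipulations are legitimate.

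For part (1): $\Psi$ is holomorphic in $z$, so $L_{z}(\Psi)=0$ and we are left with $L_{z}(\Psi^{*})=v^{2}\frac{\partial}{\partial\overline{z}}\Psi^{*}$. As $\mathcal{G}_{D}(v;\tau)$ depends on $z$ only through $v$ while $e^{2\pi i Dz}$ is holomorphic, we get $\frac{\partial}{\partial\overline{z}}\bigl(\mathcal{G}_{D}(v;\tau)e^{2\pi i Dz}\bigr)=\tfrac{i}{2}\frac{\partial}{\partial v}\mathcal{G}_{D}(v;\tau)\,e^{2\pi i Dz}$, while $\frac{\partial}{\partial v}\Gamma(\tfrac12;4\pi Q_{\tau}^{2}v)=-2\sqrt{\pi}\,|Q_{\tau}|\,v^{-1/2}e^{-4\pi Q_{\tau}^{2}v}$. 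The factor $\sgn(Q_{\tau})|Q_{\tau}|=Q_{\tau}$ coming from the definition of $\mathcal{G}_{D}$ then turns $\frac{\partial}{\partial v}\mathcal{G}_{D}$ into $2v^{-1/2}\sum_{Q\in\QD}Q_{\tau}Q(\tau,1)^{k-1}e^{-4\pi Q_{\tau}^{2}v}$; multiplying by the prefactor $\tfrac{i}{2}v^{2}\cdot 2v^{-1/2}=iv^{3/2}$ and summing over $D$ gives precisely $\Th(\tau,z)$, which is \eqref{eqn:lowerz}.

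For part (2) we compute on the complement of the geodesics $Q_{\tau}=0$, where all terms are smooth. Using $\frac{\partial}{\partial\overline{\tau}}Q_{\tau}=\frac{Q(\tau,1)}{2iy^{2}}$, the identity $|Q(\tau,1)|^{2}=y^{2}(Q_{\tau}^{2}+D)$, and $\psi_{k}'(w)=\tfrac12 w^{k-3/2}(1-w)^{-1/2}$, a direct calculation collapses $y^{2-2k}\frac{\partial}{\partial\overline{\tau}}\mathcal{F}_{D}$ to $\frac{i}{\beta(k-\frac12,\frac12)}D^{k-\frac12}\sum_{Q\in\QD}\overline{Q(\tau,1)}^{-k}=\frac{i}{\beta(k-\frac12,\frac12)}\overline{f_{D}(\tau)}$ (equivalently, this is the relation $\xi_{2-2k}\mathcal{F}_{D}=\frac{2}{\beta(k-\frac12,\frac12)}f_{D}$ of \cite{BKW} in the present normalization). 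Since the involution $[a,b,c]\mapsto[a,-b,c]$ preserves $\QD$, one has $\overline{f_{D}(\tau)}=f_{D}(-\overline{\tau})$, so summing over $D>0$ gives $y^{-2k}L_{\tau}(\Psi)=\frac{i}{\beta(k-\frac12,\frac12)}\Omega_{k}(-\overline{\tau},z)$. The parallel calculation for $\mathcal{G}_{D}$, in which $\frac{\partial}{\partial\overline{\tau}}$ of the incomplete gamma function produces the Gaussian $e^{-4\pi Q_{\tau}^{2}v}$ and $\sgn(Q_{\tau})^{2}=1$, yields $y^{2-2k}\frac{\partial}{\partial\overline{\tau}}\mathcal{G}_{D}=-2iv^{1/2}y^{-2k}\sum_{Q\in\QD}Q(\tau,1)^{k}e^{-4\pi Q_{\tau}^{2}v}$, and summing over all $D\in\Z$ produces $-\Thstr(\tau,z)$. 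Adding the two contributions and extending by continuity gives \eqref{eqn:lowertau1}. The main obstacle will be the careful cancellation of the distributional terms along the geodesics $Q_{\tau}=0$; once that is settled, both parts come down to the bookkeeping indicated above.
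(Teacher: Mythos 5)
Your computations are correct and both identities come out as stated: the evaluations of $\frac{\partial}{\partial v}\Gamma\left(\tfrac12;4\pi Q_{\tau}^2v\right)$, of $\frac{\partial}{\partial\overline{\tau}}Q_{\tau}=\frac{Q(\tau,1)}{2iy^2}$ (the paper's \eqref{eqn:lowertau}), and of $\psi_k'$ all match the paper's, and the regrouping $\overline{f_D(\tau)}=f_D(-\overline{\tau})$ via $[a,b,c]\mapsto[a,-b,c]$ is the same. The difference is organizational, and it concerns how the discontinuities along $Q_{\tau}=0$ are handled. The paper never differentiates $\mathcal{F}_D$ or $\mathcal{G}_D$ directly: Lemma \ref{lem:rewrite} first regroups $\widehat{\Psi}=\Psi_1+\Psi_2$, absorbing every $\sgn(Q_{\tau})$ into the real-analytic odd functions $\erf\left(2Q_{\tau}\sqrt{\pi v}\right)$ and $g_k\left(Q_{\tau}/\sqrt{D}\right)$ (via \eqref{eqn:erfrewrite} and Lemma \ref{gkLemma}) so that only the $\tau$-independent constants $\sgn(Q_{\tau_0})$ remain, after which $L_{\tau}$ can be applied term by term with no distributional contributions ever arising. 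Your jump-cancellation argument is exactly the same mechanism phrased before, rather than after, this regrouping; note that to pass from ``the jumps cancel'' to ``the computed identity extends across the geodesics'' you need slightly more than continuity of $\widehat{\Psi}$ --- either the smooth regrouping itself, or the standard observation that continuity of the function together with a continuous extension of its off-geodesic derivative forces differentiability on the geodesic --- so this step deserves a sentence. For part (1) the paper routes through Lemma \ref{lem:Thetakhp} (so that $L_z\widehat{\Psi}=L_z\Psi_1$ because $\pi_{k+\frac12}(\Psi_1)$ is holomorphic in $z$), whereas you differentiate $\Psi^*$ directly; the two are equivalent since $\Psi^*-\Psi_1=\Psi_3$ is holomorphic in $z$, and your version is the more economical one for this particular statement. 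The one soft spot is that you assert, rather than verify, the locally uniform convergence needed to justify term-by-term differentiation of the $Q$- and $D$-sums; the paper supplies these estimates in Proposition \ref{prop:converge} and Lemmas \ref{lem:posdefbnd} and \ref{lem:Psi1bound}, and some such input is genuinely required.
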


\begin{remark}
Bruinier, Funke, and Imamo$\overline{\text{g}}$lu communicated to us that they obtained analogous results to our Theorems \ref{thm:Thetamodular} and \ref{thm:Thetaops} for the case $k=0$ \cite{BIF2}.  Their approach is based on extending the theta lift considered in \cite{BIF1} to meromorphic modular functions.
\end{remark}
The paper is organized as follows.  In Section \ref{sec:setup} we use a theorem of Vign\'eras \cite{Vigneras} to supply a direct proof of the modularity of $\Th$ and $\Thstr$.  Section \ref{sec:holproj} is devoted to holomorphic projection, which is a key ingredient in the proof of Theorem \ref{thm:Thetamodular}.  Section \ref{sec:converge} is centered around the convergence of $\widehat{\Psi}$ and its real analyticity.  The modularity of $\widehat{\Psi}$ is established in Section \ref{sec:modularity}.

\section*{Acknowledgements}
The authors thank the referee for many helpful comments.

\section{Indefinite theta functions}\label{sec:setup}
For $\kappa\in\frac{1}{2}\Z$, a finite index subgroup $\Gamma\subseteq \SL_2(\Z)$, and a character $\chi$, we say that a function $f:\H\to \C$ is
\begin{it}modular of weight $\kappa$ for $\Gamma$ with character $\chi$\end{it} if for every $M=\left(\begin{smallmatrix}\alpha&\beta\\ \gamma&\delta\end{smallmatrix}\right)\in \Gamma$ one has $f|_{\kappa} M =\chi\left(\delta\right) f$.  Here $|_{\kappa}$ is the usual weight $\kappa$ slash operator.

To show the modularity of the indefinite theta functions which we encounter in this paper, we will employ a result of Vign\'eras \cite{Vigneras}.  For this, we define the Euler operator $E:=\sum_{i=1}^{n} w_i\frac{\partial}{\partial w_i}$.  As usual, we denote the Gram matrix associated to a nondegenerate quadratic form $q$ on $\R^n$ by $A$.  The \begin{it}Laplacian\end{it} associated to $q$ is then defined by $\Delta:=\left\langle \frac{\partial}{\partial w}, A^{-1}\frac{\partial}{\partial w}\right\rangle$.  Here $\left<\cdot,\cdot\right>$ denotes the usual inner product on $\R^n$.

\begin{theorem}[Vign\'eras]\label{thm:Vigneras}
Suppose that $n\in \N$, $q$ is a nondegenerate quadratic form on $\R^n$, $L\subset \R^n$ is a lattice on which $q$ takes integer values, and $p:\R^n\to\C$ is a function satisfying the following conditions:
\begin{itemize}
\item[(i)]
The function $f(w):=p(w)e^{-2\pi q(w)}$ times any polynomial of degree at most 2 and all partial derivatives of $f$ of order at most $2$ are elements of $L^2\left(\R^n\right)\cap L^1\left(\R^n\right)$.
\item[(ii)]
For some $\lambda\in\Z$, the function $p$ satisfies
\[
\left(E-\frac{\Delta }{4\pi}\right)p=\lambda p. 
\]
\end{itemize}
Then the indefinite theta function
\[
v^{-\frac{\lambda}{2}}\sum_{w\in L} p\left(w\sqrt{v}\right) e^{2\pi i q(w)z}
\]
is modular of  weight $\lambda+\frac{n}{2}$ for $\Gamma_0(N)$ and character $\chi\cdot \chi_{-4}^{\lambda}$, where $N$ and $\chi$ are the level and character of $q$ and $\chi_{-4}$ is the unique primitive Dirichlet character of conductor $4$.
\end{theorem}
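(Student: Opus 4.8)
The plan is to follow Vign\'eras \cite{Vigneras}: the argument rests on Poisson summation together with a Fourier‐transform identity for ``generalized Gaussians,'' the latter being where hypothesis (ii) enters. Write $z=u+iv$ and set
$$
\theta(z):=v^{-\frac{\la}{2}}\sum_{w\in L}p\!\left(w\sqrt v\right)e^{2\pi i q(w)z}.
$$
First I would record the soft inputs: hypothesis (i) guarantees absolute and locally uniform convergence of this series and of all integrals below, the legitimacy of applying Poisson summation term by term, and the real‐analyticity of $\theta$ in $z$. As with any theta series one then checks the modular transformation law on generators. Invariance under $z\mapsto z+1$ --- and more generally the behavior under the translations occurring in $\Gamma_0(N)$ --- is immediate from the fact that $q$ is $\Z$‐valued on $L$, with the level $N$ and the character $\chi$ entering only through this integrality and the reduction of $q$ modulo $N$. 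All the content is in the inversion $z\mapsto -1/z$.

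The heart of the proof is the following Fourier identity for $f(w):=p(w)e^{-2\pi q(w)}$. Let $A$ be the Gram matrix of $q$ and let $q^{\ast}(w):=\tfrac12\langle A^{-1}w,w\rangle$ be the dual form, whose associated Laplacian I denote $\Delta^{\ast}$; recall that $\Delta$ is exactly the Laplacian adapted to $q$. A direct computation shows that, after conjugating by $e^{-2\pi q}$, hypothesis (ii) is equivalent to saying that $f$ is an eigenfunction with eigenvalue $\la$ of the operator $2\pi q(w)-\tfrac{\Delta}{4\pi}-\tfrac n2$. Since the Fourier transform interchanges multiplication by $q$ with a constant multiple of $\Delta^{\ast}$, interchanges $\Delta$ with a constant multiple of multiplication by $q^{\ast}$, and fixes the scalar $\tfrac n2$, this operator is carried by $\widehat{(\,\cdot\,)}$ into the corresponding operator for $q^{\ast}$, \emph{with the same eigenvalue} $\la$. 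Hence $\widehat f$ is the analogous generalized Gaussian built from $q^{\ast}$; testing against the base case $p\equiv 1$ (the classical Gaussian integral, which for an indefinite form produces a signature‐dependent root of unity $\mu$ from the branch of the square root) and tracking homogeneity one obtains
$$
\widehat f(\xi)=\mu\,i^{-\la}\,|\det A|^{-1/2}\,f^{\ast}(\xi),
$$
where $f^{\ast}$ is built from $p$ and $q^{\ast}$ just as $f$ is built from $p$ and $q$. When $p$ is a harmonic homogeneous polynomial of degree $d$ this is the classical identity with $\la=d$ and $i^{-\la}=i^{-d}$; the virtue of running the argument through hypothesis (ii) rather than through polynomiality of $p$ is that it also covers the non‐holomorphic $p$ --- built from Hermite‐type or incomplete‐$\Gamma$ ingredients --- that occur in applications.

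Granting the Fourier identity, I would deduce the inversion law by Poisson summation, treating $z=iv$ first. Then $\theta(iv)=v^{-\la/2}\sum_{w\in L}f(w\sqrt v)$, and Poisson summation over $L$, combined with the identity for $\widehat f$ and the relations among $L$, its dual, and the level $N$, rewrites the right‐hand side as $\mu\,i^{-\la}|\det A|^{-1/2}\operatorname{vol}(\R^{n}/L)^{-1}v^{-n/2}$ times the corresponding sum with $v$ replaced by $1/v$. Multiplying by $v^{-\la/2}$ and comparing powers of $v$ yields exactly the weight $\la+\tfrac n2$ transformation under $v\mapsto 1/v$, with the multiplier being the theta multiplier attached to $(N,\chi)$; the case of arbitrary $z\in\H$ then follows by analytic continuation, both sides being real‐analytic in $z$. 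Finally one combines the inversion law with the translation behavior from the first paragraph --- or, more cleanly, bundles the cosets of $L$ in its dual into a vector‐valued theta series on which $\SL_2(\Z)$ acts through the Weil representation and then restricts --- to obtain modularity of weight $\la+\tfrac n2$ for $\Gamma_0(N)$ with character $\chi$.

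I expect the main obstacle to be the Fourier‐transform identity above: controlling the branch of the square root, equivalently the phase $\mu$, for an \emph{indefinite} $q$ --- where $e^{-2\pi q}$ is not integrable on its own, so one must genuinely use the decay supplied by $p$ in hypothesis (i) together with an analytic continuation in an auxiliary scaling parameter --- and verifying that the constant that appears is exactly $\mu\,i^{-\la}|\det A|^{-1/2}$, so that the resulting weight is precisely $\la+\tfrac n2$. Once that is settled, the Poisson‐summation bookkeeping and the descent from $\SL_2(\Z)$ to $\Gamma_0(N)$ are routine.
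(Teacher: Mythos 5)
The paper offers no proof of this statement: Theorem \ref{thm:Vigneras} is imported verbatim from Vign\'eras \cite{Vigneras} and is only \emph{applied} in Section \ref{sec:setup}, so there is no internal proof to compare yours against. Judged on its own terms, your outline does follow the strategy of the original proof (Poisson summation plus a Fourier-transform identity for $f=p\,e^{-2\pi q}$, with hypothesis (ii) driving the latter and hypothesis (i) supplying the analytic justifications), but the central step is not actually established. You argue that conjugating (ii) by $e^{-2\pi q}$ shows $f$ is an eigenfunction of $2\pi q-\frac{\Delta}{4\pi}-\frac n2$ with eigenvalue $\lambda$, that the Fourier transform carries this operator to the corresponding operator for the dual form $q^{\ast}$, and ``hence'' $\widehat f$ is proportional to $f^{\ast}$. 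The first two assertions are correct, but the conclusion does not follow: the $\lambda$-eigenspace of $2\pi q-\frac{\Delta}{4\pi}-\frac n2$ is not one-dimensional (for definite $q$ it is spanned by all Hermite products of the appropriate total degree, and for indefinite $q$ it is infinite-dimensional), so knowing that $\widehat f$ lies in the same eigenspace as $f^{\ast}$ pins down neither the function nor the constant $\mu\,i^{-\lambda}|\det A|^{-1/2}$. Moreover, ``testing against the base case $p\equiv 1$'' is inadmissible for indefinite $q$, since $e^{-2\pi q}$ then violates hypothesis (i); Vign\'eras instead proves the transformation of the full two-parameter kernel $v^{-\lambda/2}p(w\sqrt v)e^{2\pi i q(w)z}$ directly, and that computation is the substance of the theorem, not a routine verification.

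Two further points need repair. First, the inversion $z\mapsto -1/z$ together with $z\mapsto z+1$ generates all of $\SL_2(\Z)$, which is strictly larger than $\Gamma_0(N)$ for $N>1$; the honest route is the one you mention only in passing, namely the vector-valued theta series indexed by $L'/L$ transforming under the Weil representation, from which $\Gamma_0(N)$-modularity with character $\chi$ of the scalar component is extracted. This bookkeeping is where the level and character actually arise and cannot be waved through as ``routine.'' Second, the analytic continuation from $z=iv$ to all of $\H$ requires knowing in advance that both sides of the proposed identity are real-analytic (or that the identity holds on a set with a limit point in a domain of holomorphy of a suitable auxiliary function); this is standard but should be tied explicitly to hypothesis (i). As it stands, your proposal is a correct road map with the main bridge --- the precise Fourier identity $\widehat f=\mu\,i^{-\lambda}|\det A|^{-1/2}f^{\ast}$ --- asserted rather than proved.
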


We use Theorem \ref{thm:Vigneras} to show the modularity of the theta functions $\Th$ and $\Thstr$.  To state the modularity, recall that $\spc_{\kappa}$ denotes the space of real analytic functions $f:\H\to \C$ satisfying weight $\kappa\in \frac{1}{2}\Z$ modularity for $\Gamma,$ with the additional restriction that $f$ is in Kohnen's plus space whenever $\kappa\in \frac{1}{2}\Z\setminus\Z$.  
Here we say that a function satisfying weight $\ell+\frac{1}{2}$ modularity is an element of Kohnen's plus space if its Fourier expansion has the shape
$$
\sum_{(-1)^{\ell} n\equiv 0,1\pmod{4}} a_n(v)e^{2\pi i nz}.
$$
\begin{lemma}\label{lem:lowermodular}
\noindent

\noindent
\begin{enumerate}
\item
As a function of $z$, $\Th\left(\tau,z\right)\in \spc_{k-\frac{3}{2}}$.   Furthermore, as a function of $\tau$, $\Th\left(\tau,z\right)\in \spc_{2-2k}$.
\item
As a function of $z$, $\Thstr\left(-\overline{\tau},z\right)\in \spc_{k+\frac{1}{2}}$.  As a function of $\tau$, $\Thstr\left(-\overline{\tau},z\right)\in \spc_{2k}$.
\end{enumerate}
\end{lemma}
\begin{proof}

\noindent
Since the proofs are entirely analogous, we only show part (1).  To prove the modularity in $z$, we use Theorem \ref{thm:Vigneras} with $q(a,b,c)=b^2-4ac$, $L=\Z^3$, and
\[
p(a,b,c)=Q_{\tau}Q(\tau,1)^{k-1}e^{- 4\pi Q_{\tau}^2}.
\]
One sees directly that 
$$
p\left(\sqrt{v}a,\sqrt{v}b,\sqrt{v}c\right)= v^{\frac{k}{2}} Q_{\tau}Q(\tau,1)^{k-1}e^{- 4\pi Q_{\tau}^2v}.
$$
We next note that
\begin{equation}\label{eqn:Qrewrite}
\left|Q(\tau,1)\right|^2= Q_{\tau}^2 y^2 +Dy^2.
\end{equation}
It is then straightforward to show that
\begin{equation}\label{eqn:Qposdef}
D +2Q_{\tau}^2 =\frac{2}{y^2}\left|Q(\tau,1)\right|^2-D
\end{equation}
is positive definite.
From this, one can easily verify that condition (i) of Theorem \ref{thm:Vigneras} is satisfied.

A straightforward calculation yields
\[
E\left(p(a,b,c)\right)=\left(k-8\pi Q_{\tau}^2 \right)p(a,b,c)
\]
and
\[
\Delta\left(p(a,b,c)\right)=4\pi\left(3-8\pi Q_{\tau}^2 \right)p(a,b,c).
\]
Thus $\lambda=k-3$ in Theorem \ref{thm:Vigneras}.  This gives that $\Th\left(\tau,z\right)$ is an indefinite theta function of weight $k-\frac{3}{2}$ for $\Gamma_0(4)$.  Since $k-2$ is even, one sees that the plus space condition is clearly satisfied for $\Th$.

To prove the modularity in the $\tau$ variable, we directly apply translation and inversion.  By making the change of variables $b\to b+2a$ and $c\to a+b+c$, one sees by term by term comparison that
$$
\Th\left(\tau+1,z\right)=\Th\left(\tau,z\right).
$$
Similarly, the change of variables $a\to c$, $b\to -b$, and $c\to a$ implies that
$$
\Th\left(-\frac{1}{\tau},z\right) = \tau^{2-2k}\Th\left(\tau,z\right).
$$
\end{proof}

\section{Holomorphic projection}\label{sec:holproj}
In this section we introduce the holomorphic projection operator and investigate some of its basic properties.  In the integer weight case, these properties were first proven by Sturm \cite{Sturm} and a good overview may be found in Appendix C of \cite{ZagierIntro}.  For a translation invariant function $f:\H\to\C$ we write its Fourier expansion as
\begin{equation}\label{eqn:Fourierv}
f(z)=\sum_{r\in\Z} c_{r}(v) e^{2\pi i r z}.
\end{equation}
We formally define the \begin{it}weight $\kappa$ holomorphic projection of $f$\end{it} by
$$
\pi_{\kappa}(f)(z):=\pi_{\kappa,z}(f)(z) :=\sum_{r\in \N} c_{r} e^{2\pi i r z},
$$
where
\begin{equation}\label{eqn:holprojformal}
c_{r}:=\frac{(4\pi r)^{\kappa-1}}{\Gamma\left(\kappa -1\right)}\int_0^\infty c_r(t) e^{-4\pi r t} t^{\kappa-2}dt.
\end{equation}
Here $\Gamma(s)$ is the usual gamma function.

It is useful to have the following reformulation of the holomorphic projection operator.
\begin{lemma}\label{Bkernel}
If $f:\H\to \C$ is a translation invariant function, then
\begin{equation}\label{eqn:holproj}
\pi_{\kappa}(f)(z)=\frac{(\kappa-1)(2i)^\kappa}{4\pi}\int_{\mathbb{H}}\frac{f(\tau)y^\kappa}{(z-\overline{\tau})^\kappa}\frac{dxdy}{y^2},
\end{equation}
for every $1<\kappa\in \frac{1}{2}\Z$ for which the right hand side of \eqref{eqn:holproj} converges absolutely.
\end{lemma}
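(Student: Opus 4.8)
The plan is to substitute the Fourier expansion \eqref{eqn:Fourierv} of $f$ into the right-hand side of \eqref{eqn:holproj} and carry out the integration over $\tau=x+iy$ explicitly. Writing $f(\tau)=\sum_{r\in\Z}c_r(y)e^{2\pi i rx}e^{-2\pi ry}$ and using the assumed absolute convergence to interchange $\sum_r$ and $\int_\H$, one reduces, for each $r$, to the inner $x$-integral
$$
\int_{-\infty}^\infty \frac{e^{2\pi i rx}}{\bigl((u-x)+i(v+y)\bigr)^\kappa}\,dx,
$$
where $z=u+iv$. After the substitution $x\mapsto u-x$ this becomes $e^{2\pi i ru}\int_{-\infty}^\infty \frac{e^{-2\pi i rx}}{(x+ia)^\kappa}\,dx$ with $a:=v+y>0$, i.e. a Fourier transform of the power function $(x+ia)^{-\kappa}$.

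The key computational step is the classical evaluation
$$
\int_{-\infty}^\infty \frac{e^{-2\pi i rx}}{(x+ia)^\kappa}\,dx=\begin{cases}\dfrac{(-2\pi i)^\kappa}{\Gamma(\kappa)}\,r^{\kappa-1}e^{-2\pi ra}&r>0,\\[2mm] 0&r\le 0.\end{cases}
$$
For $r\le 0$ one shifts the contour into the upper half-plane, where $(x+ia)^{-\kappa}$ is holomorphic, and lets it escape to infinity; for $r>0$ one pushes the contour downward and wraps it around the branch cut of $(x+ia)^{-\kappa}$ emanating from $x=-ia$, reducing the integral to a Hankel-type representation of $1/\Gamma(\kappa)$. (For integral $\kappa$ the second case is just a residue computation, as in Sturm \cite{Sturm} and Appendix C of \cite{ZagierIntro}; since the lemma is also needed for half-integral $\kappa$, the branch-cut version is what we record.) In particular the terms of $f$ with $r\le 0$ contribute nothing, which is exactly why $\pi_\kappa(f)$ only involves $r\in\N$.

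Feeding this back in, and using $e^{-2\pi ry}\cdot e^{-2\pi r(v+y)}=e^{-2\pi rv}e^{-4\pi ry}$ together with $e^{2\pi i ru}e^{-2\pi rv}=e^{2\pi i rz}$, the $\tau$-integral collapses to
$$
\frac{(\kappa-1)(2i)^\kappa}{4\pi}\cdot\frac{(-2\pi i)^\kappa}{\Gamma(\kappa)}\sum_{r\in\N} r^{\kappa-1}e^{2\pi i rz}\int_0^\infty c_r(y)\,e^{-4\pi ry}\,y^{\kappa-2}\,dy.
$$
It remains to simplify the constant. Since $2i$ and $-2\pi i$ have principal arguments $\pi/2$ and $-\pi/2$, their principal $\kappa$-th powers multiply to $(4\pi)^\kappa$, and using $\Gamma(\kappa)=(\kappa-1)\Gamma(\kappa-1)$ the prefactor becomes $(4\pi)^{\kappa-1}/\Gamma(\kappa-1)$. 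Thus the coefficient of $e^{2\pi i rz}$ equals $\frac{(4\pi r)^{\kappa-1}}{\Gamma(\kappa-1)}\int_0^\infty c_r(t)e^{-4\pi rt}t^{\kappa-2}\,dt$, which is precisely $c_r$ as defined in \eqref{eqn:holprojformal}, proving the lemma.

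The main obstacle is not the formal manipulation but making the two analytic points rigorous: first, the interchange of $\sum_r$ with $\int_\H$, which is exactly where the absolute-convergence hypothesis on the right-hand side of \eqref{eqn:holproj} is invoked; and second, the evaluation of the Fourier transform of $(x+ia)^{-\kappa}$ for non-integral $\kappa$, where one must fix a branch of the power function and justify the deformation of the contour around its branch cut. Both are classical, and the half-integral weight case is an essentially verbatim adaptation of the integral-weight argument.
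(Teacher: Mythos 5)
Your proof is correct and is essentially the paper's argument in Poisson-dual form: the paper folds the $x$-integral to $[0,1]$ against $\sum_{n\in\Z}(z-x+iy+n)^{-\kappa}$ and cites the Lipschitz summation formula, whereas you keep the integral over all of $\R$ and evaluate the Fourier transform of $(x+ia)^{-\kappa}$ by contour deformation --- but that evaluation is precisely the content of the Lipschitz formula, and the constants, the vanishing for $r\le 0$, and the final identification with \eqref{eqn:holprojformal} all match. No substantive difference.
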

\begin{remark}
In the case that $\kappa\in \Z$, the integral in Lemma \ref{Bkernel} appears in the proof of the trace formula for the Hecke operators  established in \cite{ZagierLang}.
\end{remark}
\begin{proof}
Using the fact that $f$ is translation invariant, we rewrite the integral on the right hand side of \eqref{eqn:holproj} as
$$
\int_{0}^{\infty}y^{\kappa-2}\int_{0}^{1} f(x+iy)  \sum_{n\in \Z}\frac{1}{\left(z-x+iy +n\right)^{\kappa} }  dx dy.
$$
After inserting the Fourier expansion of $f$, the result follows by a special case of the Lipschitz summation formula \cite{Lipschitz}, which yields
$$
\sum_{n\in \Z}\frac{1}{\left(w +n\right)^{\kappa} }= \frac{\left(-2\pi i\right)^{\kappa}}{\Gamma\left(\kappa\right)} \sum_{n\in \N} n^{\kappa-1} e^{2\pi i nw}\qquad\qquad (w\in \H).
$$
\end{proof}
We henceforth extend the definition of the holomorphic projection operator to be the right hand side of \eqref{eqn:holproj} for every (not necessarily translation invariant) function $f$ for which the integral converges absolutely.  Note that the image of any such function is clearly holomorphic by either definition of the holomorphic projection operator.  Indeed, the holomorphic projection operator acts trivially on holomorphic functions.
\begin{lemma}\label{lem:hol}
If $f:\H\to\C$ is holomorphic and the right hand side of \eqref{eqn:holproj} converges absolutely, then one has
$$
\pi_{\kappa}(f)=f.
$$
\end{lemma}
\begin{proof}
We follow the proof of Proposition 1 in Section 6 of \cite{Klingen}.  We make the change of variables $\zeta =\frac{\tau-z}{\tau-\overline{z}}$ in \eqref{eqn:holproj} to get
\begin{equation*}
\begin{split}
\pi_\kappa(f)(z) &= \frac{\kappa -1}{4\pi} \int_\H f(\tau)\left(\frac{2iy}{z-\overline{\tau}}\right)^\kappa \frac{dx dy}{y^2}\\
&= \frac{\kappa -1}{\pi} \int_{B_1} \widetilde{f} (\zeta) \left(1-|\zeta|^2\right)^{\kappa -2} d \zeta_1 d \zeta_2,
\end{split}
\end{equation*}
where $\zeta=\zeta_1+i\zeta_2$, $B_1 := \left\{ \zeta\in\C \big| |\zeta|<1\right\}$ and
$$ 
\widetilde{f}(\zeta) = (1-\zeta)^{-\kappa} f\left(\frac{z-\zeta \overline{z}}{1-\zeta}\right).
$$
The function $\widetilde{f}$ is holomorphic on $B_1$ since $f$ is holomorphic on $\H$. Using polar coordinates $\zeta=R e^{i\vartheta}$ we get
\begin{equation*}
\begin{split}
\pi_\kappa(f)(z) &= \frac{\kappa -1}{\pi} \int_{B_1} \widetilde{f} (\zeta) \left(1-|\zeta|^2\right)^{\kappa -2} d\zeta_1 d\zeta_2\\
&= \frac{\kappa -1}{\pi} \int_0^1 R\left(1-R^2\right)^{\kappa -2} \left( \int_0^{2\pi} \widetilde{f}\left(Re^{i\vartheta}\right)d \vartheta\right) dR.
\end{split}
\end{equation*}
Using Cauchy's integral formula we immediately see that for $0\leq R <1$
$$
\frac{1}{2\pi} \int_0^{2\pi} \widetilde{f}\left(Re^{i\vartheta}\right)d\vartheta = \widetilde{f} (0) = f(z),
$$
and so 
$$
 \pi_\kappa(f)(z) = f(z) (\kappa -1) \int_0^1 2R \left(1-R^2\right)^{\kappa -2} d R = f(z).
$$
\end{proof}

An easy change of variables in \eqref{eqn:holproj} immediately implies that holomorphic projection commutes with the weight $\kappa$ slash operator.
\begin{lemma}\label{lem:slash}
If the right hand side of \eqref{eqn:holproj} converges absolutely for $\kappa\in\frac{1}{2}\Z$, then one has for every $M\in \SL_2(\Z)$
$$
\pi_{\kappa}(f)\big|_{\kappa}M= \pi_{\kappa}\left(f\big|_{\kappa}M\right).
$$
\end{lemma}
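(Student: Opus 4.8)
The plan is to bypass the Fourier-coefficient definition entirely and argue directly from the integral representation \eqref{eqn:holproj} of Lemma \ref{Bkernel}, treating it as the definition of $\pi_{\kappa}$ wherever the right-hand side converges absolutely; the lemma then amounts to a single change of variables on $\H$. Write $M=\sm{a}{b}{c}{d}\in\SL_2(\Z)$ and recall that the weight $\kappa$ slash operator acts by $(g|_{\kappa}M)(z)=(cz+d)^{-\kappa}g(Mz)$. It therefore suffices to prove the identity
$$
\pi_{\kappa}(f)(Mz)=(cz+d)^{\kappa}\,\pi_{\kappa}\left(f|_{\kappa}M\right)(z),
$$
since dividing by $(cz+d)^{\kappa}$ is exactly the assertion of the lemma.

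To establish this, I would start from \eqref{eqn:holproj} evaluated at $Mz$ and substitute $\tau\mapsto M\tau$ in the integral over $\H$. Three standard facts drive the computation: the hyperbolic measure $y^{-2}\,dx\,dy$ is $\SL_2(\R)$-invariant and so is unchanged; since $M$ has real entries one has $\overline{M\tau}=M\overline{\tau}$ together with the M\"obius identity $Mz-M\overline{\tau}=\frac{z-\overline{\tau}}{(cz+d)(c\overline{\tau}+d)}$; and $\im(M\tau)=\im(\tau)/|c\tau+d|^{2}$. Feeding these into the integrand, the factor $(c\overline{\tau}+d)^{\kappa}$ contributed by $\left(\im(M\tau)\right)^{\kappa}$ cancels against the one contributed by $(Mz-\overline{M\tau})^{-\kappa}$, leaving a single factor $(cz+d)^{\kappa}$ that pulls out of the integral and a factor $(c\tau+d)^{-\kappa}$ that combines with $f(M\tau)$ to produce $(f|_{\kappa}M)(\tau)$. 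Reattaching the constant $(\kappa-1)(2i)^{\kappa}/(4\pi)$ then gives precisely the displayed identity, and absolute convergence of the transformed integral is automatic since $\tau\mapsto M\tau$ is a measure-preserving bijection of $\H$.

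The one point requiring genuine care, and the step I expect to be the real (if modest) obstacle, is the bookkeeping of branches of the $\kappa$-th powers when $\kappa\in\tfrac12\Z\setminus\Z$: one must check that $|c\tau+d|^{2\kappa}=(c\tau+d)^{\kappa}(c\overline{\tau}+d)^{\kappa}$ and that the $\kappa$-th power distributes over the M\"obius identity for the chosen (principal) branches, so that the automorphy factors cancel cleanly without introducing a spurious root of unity. With the fixed branch convention already built into the definition of the weight $\kappa$ slash operator, this verification is routine, and apart from it the argument is identical to the integer-weight computation underlying the Hecke trace formula.
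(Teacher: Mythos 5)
Your proposal is correct and is exactly the argument the paper has in mind: the paper's entire proof is the remark that ``an easy change of variables in \eqref{eqn:holproj}'' gives the result, and your substitution $\tau\mapsto M\tau$ using the invariance of $y^{-2}\,dx\,dy$, the identity $Mz-M\overline{\tau}=\frac{z-\overline{\tau}}{(cz+d)(c\overline{\tau}+d)}$, and $\im(M\tau)=y/|c\tau+d|^2$ is precisely that change of variables, spelled out. Your flagging of the branch bookkeeping for $\kappa\in\frac12\Z\setminus\Z$ is an appropriate (and in the paper, silently suppressed) point of care, not a deviation in approach.
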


Combining Lemmas \ref{Bkernel} and \ref{lem:slash} yields the following special case.
\begin{lemma}\label{lem:holconverge}
If $\left|f(z)\right| v^{r}$ is bounded on $\H$ and $\kappa\in\frac{1}{2}\Z$ satisfies $\kappa>  r+1>1$, then for every $M\in \SL_2(\Z)$ one has
\begin{equation}\label{eqn:holslash}
\pi_{\kappa}(f)\big|_{\kappa}M = \pi_{\kappa}\left(f\big|_{\kappa}M\right).
\end{equation}
  Moreover, $\left|\pi_{\kappa}(f)(z)\right|v^r$ is bounded on $\H$.
\end{lemma}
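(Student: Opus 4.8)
The plan is to show that the growth condition on $f$ together with the inequalities $\kappa>r+1>1$ force the right-hand side of \eqref{eqn:holproj} to converge absolutely; once this is known, \eqref{eqn:holslash} follows at once from Lemma \ref{lem:slash}, and the asserted growth bound on $\pi_{\kappa}(f)$ will drop out of the very same estimate.

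Writing $z=u+iv$ and $\tau=x+iy$, so that $|z-\overline{\tau}|^2=(u-x)^2+(v+y)^2$, and using $|f(\tau)|\ll y^{-r}$, I would bound the right-hand side of \eqref{eqn:holproj} in absolute value by a constant times
\[
\int_0^\infty y^{\kappa-2-r}\int_{-\infty}^\infty\frac{dx}{\bigl((u-x)^2+(v+y)^2\bigr)^{\kappa/2}}\,dy.
\]
The substitution $x=u+(v+y)t$ evaluates the inner integral as $(v+y)^{1-\kappa}$ times $\int_{-\infty}^\infty(1+t^2)^{-\kappa/2}\,dt$, which is finite because $\kappa>1$ and is independent of $u$ and $v$. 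This leaves the single integral $\int_0^\infty y^{\kappa-2-r}(v+y)^{1-\kappa}\,dy$, whose integrand behaves like $y^{\kappa-2-r}$ as $y\to 0^+$ and like $y^{-1-r}$ as $y\to\infty$; hence it converges precisely when $\kappa-2-r>-1$ and $-1-r<-1$, i.e.\ exactly when $\kappa>r+1$ and $r+1>1$. Thus the hypotheses are exactly what is needed, Lemma \ref{Bkernel} gives the integral representation \eqref{eqn:holproj}, and Lemma \ref{lem:slash} then yields \eqref{eqn:holslash}.

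For the growth statement I would rescale the last integral by setting $y=vs$, which gives
\[
\int_0^\infty y^{\kappa-2-r}(v+y)^{1-\kappa}\,dy=v^{-r}\int_0^\infty\frac{s^{\kappa-2-r}}{(1+s)^{\kappa-1}}\,ds,
\]
and under $\kappa>r+1>1$ the remaining integral is a convergent Beta integral whose value does not depend on $z$. Combining this with the bound from the previous paragraph shows $|\pi_{\kappa}(f)(z)|\ll v^{-r}$ uniformly on $\H$, that is, $|\pi_{\kappa}(f)(z)|v^r$ is bounded. The computation is routine throughout; the only point that needs a little care is the bookkeeping showing that convergence of the $y$-integral at $0$ and at $\infty$ corresponds respectively to the two hypotheses $\kappa>r+1$ and $r+1>1$, so that no stronger assumption on $\kappa$ is required.
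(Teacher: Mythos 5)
Your proposal is correct and follows essentially the same route as the paper: the substitutions $x=u+(v+y)t$ and $y=vs$ are exactly the paper's changes of variables $x\to x(y+v)+u$ and $y\to yv$, leading to the same bound $v^{-r}\int_0^\infty s^{\kappa-2-r}(1+s)^{1-\kappa}\,ds\cdot\int_{\R}(1+t^2)^{-\kappa/2}\,dt$ and the same convergence analysis identifying $\kappa>r+1$ with convergence at $0$ and $r+1>1$ with convergence at $\infty$. Nothing further is needed.
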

\begin{proof}
Making the change of variables $x\to x(y+v)+u$ and then $y\to yv$, we may bound the integral of the absolute value by
\begin{multline}\label{eqn:intabs}
\int_{0}^{\infty}\frac{y^{\kappa-2}}{(1+y)^{\kappa-1}}\int_{-\infty}^{\infty} \frac{\left|f\left(xv(1+y)+u+ivy\right)\right|}{\left(x^2+1\right)^{\frac{\kappa}{2}}} dxdy\\
\ll v^{-r}\int_{0}^{\infty}\frac{y^{\kappa-r-2}}{(1+y)^{\kappa-1}} dy\int_{0}^{\infty} \frac{1}{\left(x^2+1\right)^{\frac{\kappa}{2}} } dx.
\end{multline}
Here we have used the assumed bound for $f$.  The integral over $x$ converges for $\kappa>1$ and the integral over $y$ converges for $\kappa>r+1>1$.  Lemma \ref{lem:slash} now yields \eqref{eqn:holslash} while \eqref{eqn:intabs} further implies that $\left|\pi_{\kappa}(f)(z)\right| v^r$ is bounded on $\H$.
\end{proof}

The next proposition constitutes the main step used to prove the modularity of $\widehat{\Psi}$ as a function of $z$ claimed in Theorem \ref{thm:Thetamodular}.
\begin{proposition}\label{prop:holproj}
Suppose that $f:\H\to\C$ is a translation invariant function for which $\left|f(z)\right|v^r$ is bounded on $\H$ and $\kappa\in \frac{1}{2}\Z$ satisfies $\kappa>r+1>1$.  If $\pi_{\kappa}(f)=0$ and $L_z(f)$ is modular of weight $\kappa-2$ for $\Gamma\subseteq\SL_2(\Z)$, then $f$ is modular of weight $\kappa$ for $\Gamma$.
\end{proposition}
\begin{proof}
Since $L_z$ commutes with the slash operator, the modularity of $L_z(f)$ implies that for $M\in \Gamma$
\[
L_z\left(f\Big|_{\kappa}M-f\right)=L_z\left(f\right)\Big|_{\kappa-2}M-L_z\left(f\right)= 0.
\]
Thus,
\[
f\Big|_{\kappa}M-f
\]
is holomorphic.  
Hence by Lemma \ref{lem:hol} we have 
$$
\pi_{\kappa}\left(f\Big|_{\kappa}M-f\right)=f\Big|_{\kappa}M-f.
$$
However, combining Lemma \ref{lem:holconverge} with the fact that $\pi_{\kappa}(f)=0$ then yields
\[
0=\pi_{\kappa}\left(f\right)\Big|_{\kappa}M-\pi_{\kappa}\left(f\right)=\pi_{\kappa}\left(f\Big|_{\kappa}M-f\right)=f\Big|_{\kappa}M-f.
\]
\end{proof}

\section{Convergence and singularities of $\widehat{\Psi}$}\label{sec:converge}

We first prove absolute convergence of $\widehat{\Psi}$.  The following lemma proves useful for this purpose as well as providing the growth conditions necessary to apply holomorphic projection in the next section.
\begin{lemma}\label{lem:posdefbnd}
Suppose that $Q^+$ is a positive definite ternary quadratic form and $v>0$.  Then the sum
\begin{equation}\label{eqn:posdefbnd}
v^{\frac{k}{2}+1}\sum_{a,b,c\in \Z} \left|a\tau^2+b\tau+c\right|^{k-1} e^{-2\pi Q^{+}(a,b,c)v}
\end{equation}
converges absolutely and is bounded as a function of $v$.
\end{lemma}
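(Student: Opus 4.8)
The plan is to bound \eqref{eqn:posdefbnd} by a Gaussian lattice sum over $\Z^3$ and then track the power of $v$; throughout, implied constants may depend on $\tau$, $k$, and $Q^{+}$. First I would dispose of the polynomial factor: for fixed $\tau$ the Cauchy--Schwarz inequality gives $|a\tau^2+b\tau+c|\le\max(1,|\tau|,|\tau|^2)(|a|+|b|+|c|)\ll(a^2+b^2+c^2)^{1/2}$, so $|a\tau^2+b\tau+c|^{k-1}\ll(a^2+b^2+c^2)^{(k-1)/2}$. Next, positive definiteness of $Q^{+}$ yields $\delta:=\min_{a^2+b^2+c^2=1}Q^{+}(a,b,c)>0$, and hence $Q^{+}(a,b,c)\ge\delta(a^2+b^2+c^2)$ by homogeneity. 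Consequently the sum in \eqref{eqn:posdefbnd} is bounded above by a constant times
\[
v^{\frac{k}{2}+1}\sum_{(a,b,c)\in\Z^3}\bigl(a^2+b^2+c^2\bigr)^{\frac{k-1}{2}}e^{-2\pi\delta v(a^2+b^2+c^2)},
\]
and since the terms are non-negative and decay exponentially in $(a,b,c)$ for each fixed $v>0$ this already establishes the claimed absolute convergence.

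For the boundedness in $v$ I would use the elementary inequality $t^{k-1}e^{-2\pi\delta t^2}\le C_k e^{-\pi\delta t^2}$, valid since $k\ge 4$ makes the left-hand side a bounded function of $t\ge 0$, applied with $t=\bigl(v(a^2+b^2+c^2)\bigr)^{1/2}$. Because the $(a,b,c)=(0,0,0)$ term contributes $0$ on the left, this gives
\[
v^{\frac{k}{2}+1}\sum_{(a,b,c)\in\Z^3}\bigl(a^2+b^2+c^2\bigr)^{\frac{k-1}{2}}e^{-2\pi\delta v(a^2+b^2+c^2)}\ \ll\ v^{\frac{k}{2}+1}v^{-\frac{k-1}{2}}S(v)=v^{\frac{3}{2}}S(v),
\]
where $S(v):=\sum_{\substack{(a,b,c)\in\Z^3\\(a,b,c)\ne(0,0,0)}}e^{-\pi\delta v(a^2+b^2+c^2)}$, so it remains only to bound $v^{3/2}S(v)$ on $(0,\infty)$.

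I would treat the ranges $0<v\le 1$ and $v\ge 1$ separately. For $0<v\le 1$, dropping the restriction $(a,b,c)\ne(0,0,0)$ and factoring over coordinates gives $S(v)\le\bigl(\sum_{m\in\Z}e^{-\pi\delta vm^2}\bigr)^{3}$; comparing the monotone tail $\sum_{m\ge 1}e^{-\pi\delta vm^2}$ with $\int_0^\infty e^{-\pi\delta vx^2}\,dx=\tfrac12(\delta v)^{-1/2}$ yields $\sum_{m\in\Z}e^{-\pi\delta vm^2}\le 1+(\delta v)^{-1/2}\ll v^{-1/2}$, whence $v^{3/2}S(v)\ll 1$. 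For $v\ge 1$, writing $e^{-\pi\delta v(a^2+b^2+c^2)}=e^{-\pi\delta(v-1)(a^2+b^2+c^2)}e^{-\pi\delta(a^2+b^2+c^2)}$ and using $a^2+b^2+c^2\ge 1$ throughout the sum gives $S(v)\le e^{-\pi\delta(v-1)}\sum_{(a,b,c)\ne(0,0,0)}e^{-\pi\delta(a^2+b^2+c^2)}\ll e^{-\pi\delta v}$, so $v^{3/2}S(v)\ll v^{3/2}e^{-\pi\delta v}\ll 1$. Combining the two ranges completes the proof. There is no real obstacle here; the only point needing attention is the interplay at $v\to 0^{+}$, where the prefactor $v^{k/2+1}$ must cancel exactly the $v^{-(k+2)/2}$ growth of the Gaussian lattice sum, together with the easy but essential observation that the zero vector must be excluded from $S(v)$ so that nothing is lost as $v\to\infty$.
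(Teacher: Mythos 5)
Your proof is correct, and it follows the paper's opening reduction exactly: bound $|a\tau^2+b\tau+c|^{k-1}$ by a power of $|a|+|b|+|c|$ (or of $(a^2+b^2+c^2)^{1/2}$) and use positive definiteness to replace $Q^+$ by a multiple of $a^2+b^2+c^2$, so that everything reduces to a weighted Gaussian lattice sum. Where you diverge is in the finishing step. The paper expands $(|a|+|b|+|c|)^{k-1}$ by the binomial theorem and invokes Proposition 3 of Zagier's Mellin-transform appendix to get the asymptotic $\sum_{n\geq 1} n^{\ell}e^{-\delta n^2 v}\ll v^{-(\ell+1)/2}$ as $v\to 0$ for each one-dimensional factor, then multiplies the three resulting powers of $v$ to cancel the prefactor $v^{k/2+1}$, handling $v\to\infty$ by an appeal to "obvious exponential decay." You instead absorb the entire polynomial weight into the exponential via $t^{k-1}e^{-2\pi\delta t^2}\leq C e^{-\pi\delta t^2}$ with $t=(v(a^2+b^2+c^2))^{1/2}$, paying exactly $v^{-(k-1)/2}$, and then need only the completely standard bounds $S(v)\ll v^{-3/2}$ for small $v$ and $S(v)\ll e^{-\pi\delta v}$ for large $v$. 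Your route is more elementary and self-contained (no external asymptotic proposition, and both ranges of $v$ are treated explicitly rather than by an asymptotic statement at one endpoint), at the cost of being slightly less precise about the actual small-$v$ asymptotics, which the lemma does not require anyway. One cosmetic point: the inequality $|a\tau^2+b\tau+c|\leq\max(1,|\tau|,|\tau|^2)(|a|+|b|+|c|)$ is the triangle inequality, not Cauchy--Schwarz (the latter is what you use in the subsequent step $|a|+|b|+|c|\ll(a^2+b^2+c^2)^{1/2}$); and you correctly observe the one point the paper leaves implicit, namely that the zero vector contributes nothing because $k-1>0$, which is what lets you exclude it from $S(v)$ and retain decay as $v\to\infty$.
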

\begin{remark}
This lemma can be proven more generally with an arbitrary number of variables and an arbitrary homogeneous polynomial.  Without a precise reference, we provide a proof for the special case required in this paper.  The general case would follow by the same argument, but we choose to only include this case to clarify the exposition for the reader.
\end{remark}
\begin{proof}
We first note that
$$
\left|a\tau^2+b\tau+c\right|\ll_{\tau} |a|+|b|+|c|.
$$
  Furthermore, since $Q^+$ is positive definite, there exists a constant $\delta>0$ such that
$$
2\pi Q^+\left(a,b,c\right)\geq \delta\left(a^2+b^2+c^2\right).
$$
Therefore, \eqref{eqn:posdefbnd} can be bounded by
$$
v^{\frac{k}{2}+1}\sum_{a,b,c\in \Z} \left(|a|+|b|+|c|\right)^{k-1} e^{-\delta \left(a^2+b^2+c^2\right)v}.
$$
By the binomial theorem, it suffices to bound sums of the type
$$
\sum_{a,b,c\in \N_0} a^{\ell_1}b^{\ell_2}c^{\ell_3} e^{-\delta\left(a^2+b^2+c^2\right)v}
$$
with $\ell_1+\ell_2+\ell_3=k-1$.  Using Proposition 3 of \cite{ZagierMellin}, one obtains for $v\to 0$
\begin{align*}
\sum_{n\in \N} n^{\ell}e^{-\delta n^2v}&=v^{-\frac{\ell}{2}} \sum_{n\in \N} \left(n\sqrt{v}\right)^{\ell}e^{-\delta \left(n\sqrt{v}\right)^2}\\
&\sim v^{-\frac{1}{2}\left(\ell+1\right)} \int_{0}^{\infty} w^{\ell}e^{-\delta w^2}dw\ll v^{-\frac{1}{2}\left(\ell+1\right)}.
\end{align*}
Combining the above bound with the obvious exponential decay of \eqref{eqn:posdefbnd} as $v\to \infty$ then yields the claim of the lemma.
\end{proof}

\begin{proposition}\label{prop:converge}
The sums defining the two summands $\Psi$ and $\Psi^*$ of $\widehat{\Psi}$ in \eqref{eqn:Thetakdef} converge absolutely.
\end{proposition}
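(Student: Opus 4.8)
The plan is to bound the general term of each of the two (double) sums in absolute value and to produce a convergent majorant; the implied constants will be taken locally uniform in $(\tau,z)$, so that the convergence will in fact be locally uniform (useful for the real analyticity considered later). Two facts from the proof of Lemma~\ref{lem:lowermodular} will carry most of the weight: the identity \eqref{eqn:Qrewrite}, $|Q(\tau,1)|^2=y^2(Q_{\tau}^2+D)$ for $Q=[a,b,c]\in\QD$, and the positive definiteness, recorded in \eqref{eqn:Qposdef}, of the quadratic form $(a,b,c)\mapsto 2Q_{\tau}^2+D$ on $\R^3$. Since the coefficients of this form depend continuously on $\tau$, there is a constant $c_{\tau}>0$ with $2Q_{\tau}^2+D\geq c_{\tau}(a^2+b^2+c^2)$, while trivially $|Q(\tau,1)|\leq C_{\tau}(a^2+b^2+c^2)^{1/2}$ for some $C_{\tau}>0$.

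For $\Psi^*$ the summation may be regrouped as a single sum over all $(a,b,c)\in\Z^3$ (with discriminant $b^2-4ac$), and I would begin from the elementary estimate
$$
\Gamma\left(\tfrac{1}{2};w\right)=\int_w^{\infty}t^{-1/2}e^{-t}\,dt=e^{-w}\int_0^{\infty}(w+s)^{-1/2}e^{-s}\,ds\leq e^{-w}\int_0^{\infty}s^{-1/2}e^{-s}\,ds=\sqrt{\pi}\,e^{-w}\qquad(w\geq 0).
$$
Combining this with $|e^{2\pi i Dz}|=e^{-2\pi Dv}$ and the identity $4\pi Q_{\tau}^2v+2\pi Dv=2\pi v(2Q_{\tau}^2+D)$, the absolute value of the general term of $\Psi^*$ is, up to an absolute constant,
$$
|Q(\tau,1)|^{k-1}\,\Gamma\left(\tfrac{1}{2};4\pi Q_{\tau}^2v\right)e^{-2\pi Dv}\leq\sqrt{\pi}\,|Q(\tau,1)|^{k-1}e^{-2\pi v(2Q_{\tau}^2+D)}\ll_{\tau}(a^2+b^2+c^2)^{\frac{k-1}{2}}e^{-2\pi c_{\tau} v(a^2+b^2+c^2)},
$$
and summing this over $(a,b,c)\in\Z^3$ converges by comparison with $\int_{\R^3}\|w\|^{k-1}e^{-2\pi c_{\tau} v\|w\|^2}\,dw<\infty$. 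This single estimate disposes of $D>0$, $D=0$ and $D<0$ at once: for $D<0$ the apparent exponential growth of $e^{-2\pi Dv}$ is absorbed exactly because $2Q_{\tau}^2+D$ is positive.

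For $\Psi$ only $D>0$ contributes, but now there is no Gaussian decay in $(a,b,c)$, so the sum over $\QD$ needs more care and I would keep the two summations separate. First I would record that $\psi_k(w)\ll w^{k-\frac{1}{2}}$ for $w\in[0,1]$: for $w\leq\tfrac{1}{2}$ bound $(1-t)^{-1/2}\leq\sqrt{2}$ in the integral defining $\psi_k$, and for $\tfrac{1}{2}\leq w\leq1$ use that $\psi_k$ is increasing with $\psi_k(1)=\tfrac{1}{2}\beta(k-\tfrac{1}{2},\tfrac{1}{2})\ll 1$. By \eqref{eqn:Qrewrite} the argument of $\psi_k$ appearing in $\mathcal{F}_D$ equals $\frac{D}{Q_{\tau}^2+D}\in(0,1]$, so the absolute value of the general term of $\Psi$ is, up to a constant,
$$
|Q(\tau,1)|^{k-1}\left(\frac{D}{Q_{\tau}^2+D}\right)^{k-\frac{1}{2}}e^{-2\pi Dv}=y^{k-1}D^{k-\frac{1}{2}}(Q_{\tau}^2+D)^{-k/2}e^{-2\pi Dv}.
$$
Since $D>0$ we have $Q_{\tau}^2+D\geq\tfrac{1}{2}(2Q_{\tau}^2+D)\geq\tfrac{c_{\tau}}{2}(a^2+b^2+c^2)$, so for fixed $D$ the inner sum over $\QD$ satisfies
$$
\sum_{Q\in\QD}(Q_{\tau}^2+D)^{-k/2}\ll_{\tau}\sum_{\substack{(a,b,c)\in\Z^3\setminus\{0\}\\ b^2-4ac=D}}(a^2+b^2+c^2)^{-k/2}.
$$
The step I expect to be the main obstacle is showing that this lattice sum over the quadric $b^2-4ac=D$ is bounded \emph{uniformly in $D$}: if $a\neq0$, then $c$ is determined by $(a,b)$, so the contribution is at most $\sum_{(a,b)\neq(0,0)}(a^2+b^2)^{-k/2}<\infty$ (this is where $k\geq4>2$ is used); if $a=0$, then $b^2=D$ leaves at most two choices of $b$, each followed by $\sum_{c\in\Z}(b^2+c^2)^{-k/2}\ll 1$. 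Granting this, $\sum_{Q\in\QD}(Q_{\tau}^2+D)^{-k/2}\ll_{\tau}1$ uniformly in $D$, so $\Psi$ is majorized by $y^{k-1}\sum_{D>0}D^{k-\frac{1}{2}}e^{-2\pi Dv}<\infty$; alternatively one could extract the needed control of the inner sum from the subexponential growth of $f_D$ implicit in the modularity of $\Omega$. The rest is routine bookkeeping.
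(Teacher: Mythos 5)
Your proof is correct, but it follows a genuinely different route from the paper's. For $\Psi$, the paper simply cites (4.6) and (4.11) of \cite{BKW} for the absolute convergence and polynomial growth in $D$ of $\mathcal{F}_D$; your argument is self-contained, using $\psi_k(w)\ll w^{k-\frac{1}{2}}$ together with the identity \eqref{eqn:Qrewrite} to reduce to a lattice sum over the quadric $b^2-4ac=D$ that you correctly bound uniformly in $D$ (the $a\neq 0$ versus $a=0$ split is exactly right, and $k\geq 4$ gives convergence of $\sum(a^2+b^2)^{-k/2}$). For $\Psi^*$, the paper's proof is a lengthy case analysis (splitting on $a\neq 0$, $a=0$, $D=0$, and on $|n|$ and $a$ large or small) whose goal is the \emph{individual} polynomial bound \eqref{eqn:GDbnd} on each $\mathcal{G}_D$, after which the sum over $D$ converges; you instead treat the full triple sum over $(a,b,c)\in\Z^3$ at once, absorbing $e^{-2\pi Dv}$ into the incomplete gamma bound \eqref{eqn:Gamma} and invoking the positive definiteness of $D+2Q_{\tau}^2$ from \eqref{eqn:Qposdef} to get a Gaussian majorant. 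This is precisely the device the paper itself uses later (in the proof of Lemma \ref{lem:Psi1bound}, via Lemma \ref{lem:posdefbnd}), so your version is both shorter and more unified across the signs of $D$. What your argument does not produce is the explicit bound $\mathcal{G}_D\ll_{v,\tau}D^{\frac{3k}{2}}\alpha_D$ of \eqref{eqn:GDbnd}, but that refinement is not used elsewhere in the paper, so nothing is lost for the proposition as stated. The only point worth stating explicitly when writing this up is that for $a\neq 0$ you are bounding the sum over $Q\in\QD$ above by forgetting the congruence condition on $c$, which is legitimate since all terms are nonnegative.
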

\begin{proof}
By (4.6) and (4.11) of \cite{BKW}, one easily deduces that $\mathcal{F}_D$ converges absolutely and grows at most polynomially as a function of $D$.  Therefore $\Psi$ converges absolutely.

We next move to showing the absolute convergence of $\Psi^*$.  By the well known bound
$$
\Gamma\left(\frac{1}{2};r\right)\ll e^{-r} \qquad  (r\geq 0).
$$
one obtains
\begin{equation}\label{eqn:Psi*bnd}
\sum_{a,b,c\in \Z}\left|Q\left(\tau,1\right)\right|^{k-1}\Gamma\left(\frac{1}{2}; 4\pi Q_{\tau}^2v\right)e^{-2\pi Dv}\\
\ll \sum_{a,b,c\in \Z} \left|Q(\tau,1)\right|^{k-1} e^{-2\pi v\left(D+2Q_{\tau}^2\right)}.
\end{equation}
However, \eqref{eqn:Qposdef} is positive definite, and hence Lemma \ref{lem:posdefbnd} implies the absolute convergence of $\Psi^*$.
\end{proof}
We next rewrite $\widehat{\Psi}$ in terms of other special functions.  In order to do so, we fix $\tau_0=x_0+iy_0\in\H$.  For $r\in \R$ we use the Gauss error function
$$
\erf(r):=\frac{2}{\sqrt{\pi}}\int_{0}^r e^{-t^2}dt
$$
to define
\[
g_k(r):=\frac{1}{\Gamma\left(k-\frac{1}{2}\right)}\int_0^\infty \erf\left(rt^{\frac12}\right) e^{-t} t^{k-\frac{3}{2}}dt.
\]

We furthermore formally define
\begin{equation}\label{eqn:Psi1def}
\Psi_1(\tau,z):=-\sum_{\substack{D\in\Z\\ Q\in \QD}} Q(\tau,1)^{k-1}\left(\sgn\left(Q_{\tau_0}\right)-\erf\left(2Q_{\tau}\sqrt{\pi v}\right)\right)e^{2\pi i Dz}
\end{equation}
and
\begin{equation}\label{eqn:Psi2def}
\Psi_2(\tau,z):=\sum\limits_{\substack{D>0\\ Q\in \QD}} Q\left(\tau,1\right)^{k-1} \left(\sgn\left(Q_{\tau_0}\right)-g_k\left(\frac{Q_{\tau}}{\sqrt{D}}\right)\right) e^{2\pi i Dz}.
\end{equation}
We then rewrite $\widehat{\Psi}$ in the following lemma.
\begin{lemma}\label{lem:rewrite}
The sums $\Psi_1$ and $\Psi_2$ are absolutely convergent and
\begin{equation}\label{eqn:rewrite}
\widehat{\Psi} =\Psi_1+\Psi_2.
\end{equation}
\end{lemma}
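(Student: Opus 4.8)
The plan is to rewrite each summand of $\Psi_1$ and of $\Psi_2$ in closed form, recognize the resulting pieces as the summands of $\Psi^*$ and of $\Psi$ up to an explicit ``sign--defect'' term, show that the series built from these defect terms converges absolutely, and then reassemble everything. Precisely, I will introduce
\[
S(\tau,z):=\sum_{D>0}\ \sum_{Q\in\QD}\bigl(\sgn(Q_{\tau})-\sgn(Q_{\tau_0})\bigr)Q(\tau,1)^{k-1}e^{2\pi i Dz}
\]
and prove, at the level of individual terms, that $\Psi_1=\Psi^*+S$ and $\Psi_2=\Psi-S$; the lemma then follows upon invoking Proposition \ref{prop:converge} for the absolute convergence of $\Psi$ and $\Psi^*$.

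The first ingredient is the substitution $t\mapsto t^2$ in the incomplete gamma function, which gives $\Gamma\left(\frac{1}{2};w\right)=\sqrt{\pi}\bigl(1-\erf(\sqrt{w})\bigr)$ for $w\ge 0$. Since $\erf$ is odd, taking $w=4\pi Q_{\tau}^2v$ yields
\[
-\frac{1}{\sqrt{\pi}}\,\sgn(Q_{\tau})\,\Gamma\!\left(\tfrac12;4\pi Q_{\tau}^2v\right)=-\bigl(\sgn(Q_{\tau})-\erf(2Q_{\tau}\sqrt{\pi v})\bigr),
\]
so that, comparing \eqref{eqn:GDdef} with \eqref{eqn:Psi1def}, the $Q$--summand of $\Psi_1$ attached to a discriminant $D$ equals the corresponding summand of $\Psi^*$ plus $\bigl(\sgn(Q_{\tau})-\sgn(Q_{\tau_0})\bigr)Q(\tau,1)^{k-1}$. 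The second ingredient is the elementary identity, valid for all $r\in\R$,
\[
g_k(r)=\sgn(r)-\frac{\sgn(r)}{\beta\left(k-\frac12,\frac12\right)}\,\beta\!\left(\tfrac{1}{1+r^2};k-\tfrac12,\tfrac12\right)=\sgn(r)-\frac{2\sgn(r)}{\beta\left(k-\frac12,\frac12\right)}\,\psi_k\!\left(\tfrac{1}{1+r^2}\right).
\]
Both sides are odd and vanish at $r=0$, so it suffices to check it for $r>0$; there one differentiates under the integral in the definition of $g_k$ to obtain $g_k'(r)=\frac{2\Gamma(k)}{\sqrt{\pi}\,\Gamma\left(k-\frac12\right)}(1+r^2)^{-k}$, differentiates the right-hand side, and compares using $\beta\left(k-\frac12,\frac12\right)=\sqrt{\pi}\,\Gamma\left(k-\frac12\right)/\Gamma(k)$. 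Applying this with $r=Q_{\tau}/\sqrt{D}$ for $D>0$ and using \eqref{eqn:Qrewrite} to identify $\frac{1}{1+r^2}=Dy^2/|Q(\tau,1)|^2$, the definition of $\mathcal{F}_D$ shows that the $Q$--summand of $\Psi_2$ in \eqref{eqn:Psi2def} equals the corresponding summand of $\Psi$ minus $\bigl(\sgn(Q_{\tau})-\sgn(Q_{\tau_0})\bigr)Q(\tau,1)^{k-1}$. Together these give $\Psi_1=\Psi^*+S$ and $\Psi_2=\Psi-S$ termwise.

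It then remains to prove that $S$ converges absolutely. For $D\le 0$ the form $Q$ is semidefinite, so $Q_{\tau}$ has constant sign on $\H$ — equal to the sign of the leading nonzero coefficient of $Q$, interpreted as $0$ when $Q\equiv 0$ (in which case $Q(\tau,1)^{k-1}=0$ anyway) — and in particular $\sgn(Q_{\tau})=\sgn(Q_{\tau_0})$; hence only $D>0$ contributes to $S$. Fix $D>0$ and $Q=[a,b,c]\in\QD$ with $\sgn(Q_{\tau})\neq\sgn(Q_{\tau_0})$. By the intermediate value theorem there is a point $w$ on the geodesic segment joining $\tau_0$ to $\tau$ with $Q_w=0$, and then \eqref{eqn:Qrewrite} gives $|Q(w,1)|=\sqrt{D}\,\im(w)$. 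Since $w$ lies in a fixed compact subset of $\H$, writing out the real and imaginary parts of $Q(w,1)$ together with $Q_w=0$ yields $|a|,|b|,|c|\ll_{\tau,\tau_0}\sqrt{D}$; consequently $|Q(\tau,1)|\ll_{\tau,\tau_0}\sqrt{D}$, and the number of such $Q\in\QD$ is $\ll_{\tau,\tau_0}D$ (choosing $a$ and $b$ determines $c$). Hence the inner sum over these $Q$ is $\ll_{\tau,\tau_0}D^{\frac{k+1}{2}}$, and since $v>0$ we get $\sum_{D>0}D^{\frac{k+1}{2}}e^{-2\pi Dv}<\infty$, proving the absolute convergence of $S$. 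Combining with Proposition \ref{prop:converge}, both $\Psi_1=\Psi^*+S$ and $\Psi_2=\Psi-S$ converge absolutely, and adding them the $\pm S$ cancel to give $\Psi_1+\Psi_2=\Psi+\Psi^*=\widehat{\Psi}$.

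I expect the main obstacle to be the bookkeeping in the last paragraph: verifying that the sign--defect sum $S$ is genuinely supported on $D>0$, bounding both the size $|Q(\tau,1)|$ and the number of the offending forms of each discriminant $D$, and confirming that the term-by-term manipulations are legitimate once absolute convergence is in hand. The two closed-form identities, by contrast, are routine once correctly set up.
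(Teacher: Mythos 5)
Your proposal is correct and follows the paper's own proof in all essentials: the same sign-defect series (the paper's $\Psi_3$), the same incomplete-gamma/error-function identity \eqref{eqn:erfrewrite}, and the same closed form for $g_k$ (Lemma \ref{gkLemma}), with absolute convergence of $\Psi$ and $\Psi^*$ imported from Proposition \ref{prop:converge}. The only point of divergence is minor: where the paper bounds the sign-discrepant forms of discriminant $D>0$ by explicit inequalities on $a$ and $b$ (handling $a=0$ separately via inversion), you locate a zero of $w\mapsto Q_w$ on the compact segment from $\tau_0$ to $\tau$ and solve the resulting linear system for $a,b,c$ — an equally valid way to get the polynomial count.
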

Before proving Lemma \ref{lem:rewrite}, we first rewrite $g_k$.
\begin{lemma}\label{gkLemma}
We have
\[
g_k(r)=\sgn(r)-\frac{2}{\beta\left(k-\frac{1}{2}, \frac12\right)}\sgn(r)\psi_{k} \left(\frac{1}{1+r^2}\right).
\]
\end{lemma}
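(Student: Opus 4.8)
The plan is to relate the integral defining $g_k$ directly to an incomplete $\beta$-integral by an explicit substitution. First I would record the identity
$$
\erf(x) = \sgn(x)\left(1 - \frac{1}{\sqrt{\pi}}\Gamma\!\left(\tfrac{1}{2}; x^2\right)\right),
$$
which follows from $\Gamma(\tfrac12;x^2) = 2\int_{|x|}^\infty e^{-u^2}\,du = \sqrt{\pi}\,(1-\erf(|x|))$ and the oddness of $\erf$. Substituting this into the definition of $g_k(r)$ and using $\frac{1}{\Gamma(k-1/2)}\int_0^\infty e^{-t}t^{k-3/2}\,dt = 1$ to evaluate the ``$\sgn(r)$'' piece, we get
$$
g_k(r) = \sgn(r)\left(1 - \frac{1}{\sqrt{\pi}\,\Gamma\!\left(k-\tfrac12\right)}\int_0^\infty \Gamma\!\left(\tfrac12; r^2 t\right) e^{-t} t^{k-\frac32}\,dt\right).
$$
So it remains to show that the double integral equals $\sqrt{\pi}\,\Gamma(k-\tfrac12)\cdot\frac{2}{\beta(k-1/2,1/2)}\psi_k\!\left(\frac{1}{1+r^2}\right)$, i.e. (recalling $\psi_k(v) = \tfrac12\beta(v;k-\tfrac12,\tfrac12)$ and $\beta(k-\tfrac12,\tfrac12) = \Gamma(k-\tfrac12)\Gamma(\tfrac12)/\Gamma(k) = \sqrt{\pi}\,\Gamma(k-\tfrac12)/\Gamma(k)$) that
$$
\frac{1}{\sqrt{\pi}\,\Gamma\!\left(k-\tfrac12\right)}\int_0^\infty \Gamma\!\left(\tfrac12; r^2 t\right) e^{-t} t^{k-\frac32}\,dt = \frac{\Gamma(k)}{\sqrt{\pi}\,\Gamma\!\left(k-\tfrac12\right)}\,\beta\!\left(\tfrac{1}{1+r^2};k-\tfrac12,\tfrac12\right).
$$

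The key computational step I would carry out is to write $\Gamma(\tfrac12;r^2t) = \int_{r^2t}^\infty s^{-1/2}e^{-s}\,ds$ and interchange the order of integration (justified by positivity / absolute convergence), so that the left side becomes $\frac{1}{\Gamma(k-1/2)}\int_0^\infty s^{-1/2} e^{-s}\int_0^{s/r^2} e^{-t}t^{k-3/2}\,dt\,ds$ after swapping. Rescaling the inner variable $t = (s/r^2)u$ would be one route, but cleaner is to recognize that after the swap one has, for each $s$, an incomplete lower gamma integral $\gamma(k-\tfrac12, s/r^2)$; however the most efficient path is instead the substitution $s = r^2 t\, w/(1-w)$ inside $\Gamma(\tfrac12;r^2t)$ (a standard trick converting an incomplete gamma into a $\beta$-type kernel), which turns $\frac{1}{\sqrt\pi}\Gamma(\tfrac12;r^2t)$ into $\frac{1}{\sqrt\pi}\int \dots$; then the $t$-integral evaluates by the Euler gamma integral and one is left with a single $\beta$-integral in $w$ with upper limit corresponding to $s \mapsto \infty$, namely $w$ ranging up to $\frac{1}{1+r^2}$. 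I would carefully track the limits of integration to confirm the upper endpoint is exactly $\frac{1}{1+r^2}$ and the exponents match $(k-\tfrac12, \tfrac12)$.

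The main obstacle I anticipate is purely bookkeeping: getting the substitution that converts the nested incomplete-gamma integral into the single incomplete-beta integral right on the first try, in particular pinning down that the upper limit is $\frac{1}{1+r^2}$ rather than, say, $\frac{r^2}{1+r^2}$, and correctly matching the normalizing constant $\frac{\Gamma(k)}{\sqrt\pi\,\Gamma(k-1/2)} = \frac{2}{\beta(k-1/2,1/2)}$. A useful sanity check along the way: as $r\to\infty$ one has $\frac{1}{1+r^2}\to 0$ so $\psi_k\to 0$ and the formula gives $g_k(r)\to\sgn(r) = 1$, consistent with $\erf(\infty)=1$; and as $r\to 0^+$, $\psi_k(1) = \tfrac12\beta(k-\tfrac12,\tfrac12)$ so the right side gives $1 - 1 = 0 = g_k(0)$, consistent with $\erf(0)=0$. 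These checks make me confident the stated identity is correct and that only the constant-chasing in the substitution needs care.
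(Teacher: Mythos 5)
Your proposal is correct in outline and would prove the lemma, but it takes a genuinely different route from the paper. The paper differentiates under the integral sign: from $\frac{d}{dr}\erf\bigl(rt^{1/2}\bigr)=\frac{2}{\sqrt{\pi}}t^{1/2}e^{-r^2t}$ one gets the Euler integral $g_k'(r)=\frac{2}{\beta(k-\frac12,\frac12)}(1+r^2)^{-k}$ in one line; since $g_k(0)=0$, integrating back and substituting $t\mapsto\sqrt{1/t-1}$ in $\int_0^{|r|}(1+t^2)^{-k}\,dt$ produces the incomplete beta function directly. You instead keep $r$ fixed, expand $\erf$ via the incomplete gamma function, and convert the resulting double integral by Fubini plus a substitution. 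This works, and your endpoint and normalization sanity checks are right, but the one concrete substitution you name, $s=r^2tw/(1-w)$, is not the correct one: it sends $s=r^2t$ to $w=\frac12$ and $s=\infty$ to $w=1$. The substitution that does the job is $s=t(1-w)/w$ (equivalently: first rescale $s=r^2tu$, then set $w=1/(1+r^2u)$), which sends $s=r^2t$ to $w=\frac{1}{1+r^2}$ and $s=\infty$ to $w=0$, and after which the inner $t$-integral is $\Gamma(k)w^k$ because $1+(1-w)/w=1/w$. With that correction your computation closes: $\int_0^\infty\Gamma\bigl(\frac12;r^2t\bigr)e^{-t}t^{k-\frac32}\,dt=\Gamma(k)\,\beta\bigl(\frac{1}{1+r^2};k-\frac12,\frac12\bigr)$, which matches the required constant $\Gamma(k)/\bigl(\sqrt{\pi}\,\Gamma(k-\frac12)\bigr)=2/\beta(k-\frac12,\frac12)$. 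The paper's differentiation trick buys brevity (one derivative, one elementary integral, one substitution), while your approach avoids having to justify differentiating under the integral sign and to fix the constant of integration via $g_k(0)=0$; both are legitimate.
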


\begin{proof}
Using the fact that
\[
\beta(a, b)=\frac{\Gamma(a)\Gamma(b)}{\Gamma(a+b)},
\]
we compute
\begin{equation}\label{eqn:gk'}
g_k'(r)=\frac{2}{\beta\left(k-\frac{1}{2},\frac{1}{2}\right) \left(1+r^2\right)^{k}}.
\end{equation}
Moreover $g_k(0)=0$ since $\erf(0)=0$.  Thus
$$
g_k(r)= \frac{2\sgn(r)}{\beta\left(k-\frac{1}{2},\frac{1}{2}\right)}\int_0^{|r|}\frac{1}{\left(1+t^2\right)^k}dt.
$$
Making the change of variables $t\to \sqrt{\frac{1}{t}-1}$ easily gives the claim of the lemma.
\end{proof}
\begin{proof}[Proof of Lemma \ref{lem:rewrite}]
First recall that
\begin{equation}\label{eqn:erfrewrite}
\erf(\sqrt{\pi}t)=\sgn(t)\left(1-\frac{1}{\sqrt{\pi}}\Gamma\left(\frac{1}{2};\pi t^2\right)\right).
\end{equation}
By Proposition \ref{prop:converge} the function $\widehat{\Psi}$ converges absolutely.  We add
\begin{equation}\label{eqn:sgndiff}
\Psi_3(\tau,z):=\sum_{\substack{D\in \Z\\ Q\in \QD}}\left(\sgn\left(Q_{\tau_0}\right)-\sgn\left(Q_{\tau}\right)\right)Q\left(\tau,1\right)^{k-1}e^{2\pi i Dz}
\end{equation}
to $\Psi$ and subtract it from $\Psi^*$.  We then compare the $D$-th Fourier coefficient (with respect to $e^{2\pi i u}$) on both sides of \eqref{eqn:rewrite}.  Combining \eqref{eqn:erfrewrite} and Lemma \ref{gkLemma}, it remains to show that $\Psi_3$ converges absolutely and that whenever $D\leq 0$
\begin{equation}\label{eqn:sgnD<=0}
\sgn(Q_{\tau})=\sgn(Q_{\tau_0}).
\end{equation}
To show absolute convergence, we rewrite \eqref{eqn:sgndiff} in the notation of Lemma 2.6 of \cite{ZwegersThesis} and then apply Lemma \ref{lem:posdefbnd}.  We set $q(a,b,c)=b^2-4ac$, $c_1=(-1, 2x, -|\tau|^2)$, and $c_2=(-1, 2x_0, -|\tau_0|^2)$.  Denoting the bilinear form associated to $q$ by $B(u_1,u_2)=q(u_1+u_2)-q(u_1)-q(u_2),$ one computes for $w=(a,b,c)$:
\begin{align*}
q\left(c_1\right)&= -4y^2<0,\\
q\left(c_2\right)&=-4y_0^2<0,\\
B\left(c_1,w\right)&= 4yQ_{\tau}, \\
B\left(c_2,w\right)&= 4y_0Q_{\tau_0},\\
B\left(c_1,c_2\right)&= -4\left(\left|\tau_0\right|^2-2xx_0 +\left|\tau\right|^2\right)<0.
\end{align*}
By Lemma 2.6 of \cite{ZwegersThesis}, the quadratic form
\begin{equation}\label{eqn:Q+def}
Q^+(w):=q(w) + \frac{B\left(c_1,c_2\right)}{4q\left(c_1\right)q\left(c_2\right)-B\left(c_1,c_2\right)^2} B\left(c_1,w\right)B\left(c_2,w\right)
\end{equation}
is positive definite.  Moreover, (2.13) of \cite{ZwegersThesis} implies that
\begin{equation}\label{eqn:Psi3conv}
\sum_{a,b,c\in \Z}\left|Q\left(\tau,1\right)\right|^{k-1}\left|\sgn\left(Q_{\tau_0}\right)-\sgn\left(Q_{\tau}\right)\right|e^{-2\pi q(a,b,c)v}\ll \sum_{a,b,c\in \Z} \left|Q(\tau,1)\right|^{k-1}e^{-2\pi Q^+(a,b,c)v}.
\end{equation}
Since $Q^+$ is positive definite, Lemma \ref{lem:posdefbnd} implies that the above sum converges.

To obtain \eqref{eqn:sgnD<=0}, we note that since $Q^+(w)$ is positive definite, for every $w\neq 0$ with $q(w)\leq 0$ we have 
$$
\frac{B\left(c_1,c_2\right)}{4q\left(c_1\right)q\left(c_2\right)-B\left(c_1,c_2\right)^2} B\left(c_1,w\right)B\left(c_2,w\right)> -q(w)\geq 0.
$$
Noting that $B\left(c_1,c_2\right)<0$ and for $\tau\neq\tau_0$
$$
4q\left(c_1\right)q\left(c_2\right)-B\left(c_1,c_2\right)^2=-16\left(\left(y_0^2-y^2\right)^2 + 2\left(x-x_0\right)^2\left(y_0^2+y^2\right)+\left(x-x_0\right)^4\right)<0,
$$
we have 
$$
B\left(c_1,w\right)B\left(c_2,w\right)>0.
$$
Thus we conclude that 
$$
\sgn\left(Q_{\tau}\right)=\sgn\left(B\left(c_1,w\right)\right)=\sgn\left(B\left(c_2,w\right)\right)=\sgn\left(Q_{\tau_0}\right).
$$

\end{proof}

\section{Modularity and holomorphic projection}\label{sec:modularity}
In this section, we prove Theorem \ref{thm:Thetamodular} and Theorem \ref{thm:Thetaops}.  
As indicated before Proposition \ref{prop:holproj}, a key step in determining modularity is to use holomorphic projection.  In order to do so, we first show that $\Psi_1$ satisfies the growth conditions necessary to apply Lemma \ref{lem:holconverge}.  
\begin{lemma}\label{lem:Psi1bound}
The function $v^{\frac{k}{2}+1} \left|\Psi_1(\tau,z)\right|$ is bounded.
\end{lemma}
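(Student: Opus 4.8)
The plan is to split $\Psi_1$ according to whether the sign factor $\sgn(Q_{\tau_0})$ in \eqref{eqn:Psi1def} agrees with $\sgn(Q_\tau)$, and to estimate the two resulting pieces separately. Write $\Psi_1=\Psi_1^{+}+\Psi_1^{-}$, where $\Psi_1^{+}$ collects the terms of \eqref{eqn:Psi1def} with $\sgn(Q_\tau)=\sgn(Q_{\tau_0})$ and $\Psi_1^{-}$ the remaining terms. By \eqref{eqn:sgnD<=0}, every form contributing to $\Psi_1^{-}$ has $D>0$. Since $\Psi_1$ converges absolutely (Lemma~\ref{lem:rewrite}), so do both pieces. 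Here and below implied constants may depend on $\tau$ and on the fixed point $\tau_0$, which we indicate by $\ll_\tau$.

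For $\Psi_1^{+}$, the idea is to use \eqref{eqn:erfrewrite} to rewrite, whenever $\sgn(Q_\tau)=\sgn(Q_{\tau_0})$,
$$
\sgn(Q_{\tau_0})-\erf\left(2Q_\tau\sqrt{\pi v}\right)=\sgn(Q_\tau)-\erf\left(2Q_\tau\sqrt{\pi v}\right)=\frac{\sgn(Q_\tau)}{\sqrt{\pi}}\,\Gamma\left(\tfrac12;4\pi Q_\tau^2 v\right),
$$
which by \eqref{eqn:Gamma} is $\ll e^{-4\pi Q_\tau^2 v}$ in absolute value. Because $\left|e^{2\pi i Dz}\right|=e^{-2\pi D v}$ and $4\pi Q_\tau^2 v+2\pi D v=2\pi Q^{+}(a,b,c)v$, where $Q^{+}:=2Q_\tau^2+D$ is the positive definite ternary form of \eqref{eqn:Qposdef}, each term of $\Psi_1^{+}$ is $\ll\left|Q(\tau,1)\right|^{k-1}e^{-2\pi Q^{+}(a,b,c)v}$. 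These bounds being nonnegative, the sum may be enlarged to all of $\Z^3$, and Lemma~\ref{lem:posdefbnd} then yields $v^{\frac{k}{2}+1}\left|\Psi_1^{+}(\tau,z)\right|\ll_\tau 1$.

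For $\Psi_1^{-}$ the essential input is the bound $\left|Q(\tau,1)\right|\ll_\tau\sqrt{D}$ for a form $Q\in\QD$ ($D>0$) with $\sgn(Q_\tau)\neq\sgn(Q_{\tau_0})$. Indeed, by \eqref{eqn:Qrewrite} the number $\left|Q_\tau\right|/\sqrt{D}$ is the hyperbolic sine of the distance from $\tau$ to the geodesic $\{w\in\H:Q_w=0\}$, and since this geodesic separates $\tau$ from the fixed point $\tau_0$, that distance is at most the hyperbolic distance from $\tau$ to $\tau_0$; hence $\left|Q_\tau\right|\ll_\tau\sqrt{D}$, and again by \eqref{eqn:Qrewrite}, $\left|Q(\tau,1)\right|=y\sqrt{Q_\tau^2+D}\ll_\tau\sqrt{D}$. (Alternatively, this can be derived by a direct estimate from \eqref{eqn:Qtaurewrite} in the style of the proof of Lemma~\ref{lem:rewrite}, the case $a=0$ being reduced to $a\neq 0$ via $\tau\mapsto-1/\tau$.) One then bounds $\left|\sgn(Q_{\tau_0})-\erf(2Q_\tau\sqrt{\pi v})\right|\le 2$ trivially, so that each sign‑disagreeing form contributes $\ll_\tau D^{\frac{k-1}{2}}e^{-2\pi Dv}$ to $\Psi_1^{-}$. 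Such a form with $D\le X$ satisfies $Q^{+}(a,b,c)=2Q_\tau^2+D\ll_\tau X$, so it lies in a fixed ellipsoid dilated by $\sqrt{X}$; thus there are $\ll_\tau X^{3/2}$ of them with $D\le X$, and since each has $\left|Q(\tau,1)\right|^{k-1}\ll_\tau X^{\frac{k-1}{2}}$, partial summation against $e^{-2\pi Dv}$ gives $\left|\Psi_1^{-}(\tau,z)\right|\ll_\tau v^{-(\frac{k}{2}+1)}$, i.e. $v^{\frac{k}{2}+1}\left|\Psi_1^{-}(\tau,z)\right|\ll_\tau 1$. Together with the estimate for $\Psi_1^{+}$ this proves the lemma.

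I expect the main obstacle to be the bound $\left|Q(\tau,1)\right|\ll_\tau\sqrt{D}$ on the sign‑disagreeing forms. It is exactly this geometric fact — that a form whose geodesic separates $\tau$ from the fixed $\tau_0$ cannot have $\tau$ far from that geodesic — which keeps the total contribution of $\QD$ to $\Psi_1^{-}$ of size $O_\tau\!\left(D^{k/2}\right)$, so that $v^{\frac{k}{2}+1}\sum_{D>0}D^{k/2}e^{-2\pi Dv}$ stays bounded; the crude bound $\left|Q(\tau,1)\right|\ll_\tau D$, valid for every form with $D>0$, would instead produce an unbounded factor as $v\to 0$. Every other step is a direct application of Lemma~\ref{lem:posdefbnd} or routine Abel summation.
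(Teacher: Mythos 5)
Your proof is correct, but it handles the essential difficulty by a different route than the paper. The paper writes $\Psi_1=\Psi^*-\Psi_3$ and bounds the two pieces separately: the sign-difference sum $\Psi_3$ is majorized by $\sum_{a,b,c}\left|Q(\tau,1)\right|^{k-1}e^{-2\pi Q^+(a,b,c)v}$ for a positive definite $Q^+$ supplied abstractly by Lemma 2.6 and (2.13) of \cite{ZwegersThesis}, and then both pieces are fed into Lemma \ref{lem:posdefbnd}. Your treatment of $\Psi_1^{+}$ coincides with the paper's treatment of $\Psi^*$ (incomplete gamma decay plus positive definiteness of $2Q_\tau^2+D$ from \eqref{eqn:Qposdef}, then Lemma \ref{lem:posdefbnd}), but for the sign-disagreeing terms you replace Zwegers' lemma by an explicit geometric count: $\left|Q_\tau\right|/\sqrt{D}=\sinh d(\tau,S_Q)\le\sinh d(\tau,\tau_0)$ when the geodesic $S_Q$ separates $\tau$ from $\tau_0$, so these forms lie in the ellipsoid $Q^+\ll_\tau X$, number $O_\tau(X^{3/2})$ up to discriminant $X$, and Abel summation against $e^{-2\pi Dv}$ gives exactly $v^{-\frac{k}{2}-1}$. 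This is self-contained (no appeal to \cite{ZwegersThesis}) at the price of the hyperbolic-distance interpretation of $Q_\tau/\sqrt{D}$ --- which, as you note, could be replaced by elementary estimates in the style of the proof of Lemma \ref{lem:rewrite}. Note that your counting bound must be (and is) sharp: $O(X^{3/2})$ forms each of size $O(D^{\frac{k-1}{2}})$ yields precisely the exponent $\frac{k}{2}+1$, with no room to spare, whereas the paper's exponential majorant $e^{-2\pi Q^+v}$ retains the decay in $Q_\tau^2$ and is therefore more robust. Both arguments are complete and of comparable length.
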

\begin{proof}
Since $\Psi_1=\Psi^*-\Psi_3$, it suffices to bound $\Psi_3$ and $\Psi^*$.  By \eqref{eqn:Psi3conv} and Lemma \ref{lem:posdefbnd},  $\Psi_3$ may be estimated against a constant times $v^{-\frac{k}{2}-1}$.  We next bound $\Psi^*$ by \eqref{eqn:Psi*bnd}.  Since \eqref{eqn:Qposdef} is positive definite, Lemma \ref{lem:posdefbnd} concludes the proof.
\end{proof}

By Lemma \ref{lem:Psi1bound}, we may now apply holomorphic projection in $z$ to $\Psi_1$.  If the dependence on $\tau$ is clear, then we suppress it in what follows.  We write
\[
\Psi_1(z)=\sum_{D\in\Z} c_{D}(v) e^{2\pi iD z},
\]
where
\[
c_{D}(v):=-\sum_{Q\in \QD}Q(\tau,1)^{k-1}\left(\sgn\left(Q_{\tau_0}\right) -\erf\left(2 Q_{\tau}\sqrt{\pi v}\right)\right).
\]
\begin{lemma}\label{lem:Thetakhp}
One has that
\begin{equation}\label{eqn:rewritehp}
\widehat{\Psi}=\Psi_1-\pi_{k+\frac{1}{2}}\left(\Psi_1\right).
\end{equation}
Thus in particular
$$
\pi_{k+\frac{1}{2}}\left(\widehat{\Psi}\right)=0.
$$
\end{lemma}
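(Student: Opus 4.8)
The plan is to compute the weight $k+\frac12$ holomorphic projection of $\Psi_1$ term by term and recognize the result as $\Psi_1 - \widehat\Psi$. By Lemma \ref{lem:Psi1bound} the function $v^{k/2+1}|\Psi_1(\tau,z)|$ is bounded, so with $r = k/2+1$ and $\kappa = k+\frac12 > r+1$ (since $k\ge 4$), Lemma \ref{lem:holconverge} applies: holomorphic projection in $z$ is defined on $\Psi_1$, commutes with the slash operator, and preserves the boundedness. So the first task is to evaluate, for each $D > 0$, the coefficient integral
$$
\frac{(4\pi D)^{k-\frac12}}{\Gamma\left(k-\frac12\right)}\int_0^\infty c_D(t)\, e^{-4\pi D t}\, t^{k-\frac32}\,dt,
$$
where $c_D(v) = -\sum_{Q\in\QD} Q(\tau,1)^{k-1}\bigl(\sgn(Q_{\tau_0}) - \erf(2Q_\tau\sqrt{\pi v})\bigr)$. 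For $D \le 0$ the holomorphic projection kills the coefficient outright (only $r\in\N$ survive in $\pi_\kappa$), so those terms of $\Psi_1$ pass through untouched into $\Psi_1 - \pi_{k+\frac12}(\Psi_1)$.

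The key computation is the single integral $\int_0^\infty \bigl(\sgn(Q_{\tau_0}) - \erf(2Q_\tau\sqrt{\pi t})\bigr) e^{-4\pi D t} t^{k-\frac32}\,dt$ for fixed $Q$ of discriminant $D>0$. This is exactly the type of integral packaged by the function $g_k$: indeed, $g_k(r) = \frac{1}{\Gamma(k-\frac12)}\int_0^\infty \erf(r t^{1/2}) e^{-t} t^{k-3/2}\,dt$, and after the substitution $t \mapsto 4\pi D t$ the integral of the $\erf$ term becomes precisely $\frac{\Gamma(k-\frac12)}{(4\pi D)^{k-\frac12}} g_k\!\left(\frac{Q_\tau}{\sqrt D}\right)$ (using $Q_\tau\sqrt{\pi/(4\pi D)} = \frac12 Q_\tau/\sqrt D$). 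The $\sgn(Q_{\tau_0})$ term integrates via the plain Gamma integral to $\frac{\Gamma(k-\frac12)}{(4\pi D)^{k-\frac12}}\sgn(Q_{\tau_0})$. Multiplying by the normalizing factor $(4\pi D)^{k-\frac12}/\Gamma(k-\frac12)$ and summing over $Q\in\QD$ yields that the $D$-th coefficient of $\pi_{k+\frac12}(\Psi_1)$ equals $-\sum_{Q\in\QD} Q(\tau,1)^{k-1}\bigl(\sgn(Q_{\tau_0}) - g_k(Q_\tau/\sqrt D)\bigr)$, i.e. $-$ (the $D$-th coefficient of $\Psi_2$). Hence $\pi_{k+\frac12}(\Psi_1) = -\Psi_2$ for $D>0$ and vanishes for $D\le 0$, so
$$
\Psi_1 - \pi_{k+\frac12}(\Psi_1) = \Psi_1 + \Psi_2 = \widehat\Psi
$$
by Lemma \ref{lem:rewrite}, which is \eqref{eqn:rewritehp}. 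Applying $\pi_{k+\frac12}$ to both sides and using that $\pi_{k+\frac12}$ is idempotent (which follows since $\pi_{k+\frac12}$ acts as the identity on its image, as $\pi_\kappa$ fixes functions already supported on $r\in\N$ — or one can invoke Lemma \ref{lem:hol} applied to the holomorphic output) gives $\pi_{k+\frac12}(\widehat\Psi) = \pi_{k+\frac12}(\Psi_1) - \pi_{k+\frac12}(\pi_{k+\frac12}(\Psi_1)) = 0$.

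The main obstacle is justifying the term-by-term interchange of the sum over $Q\in\QD$ with the coefficient integral in \eqref{eqn:holprojformal}, and making sure the resulting sum over $Q$ still converges so that it genuinely represents the Fourier coefficient of $\pi_{k+\frac12}(\Psi_1)$. For this I would use absolute convergence of $\Psi_1$ (Lemma \ref{lem:rewrite}) together with the exponential/polynomial bounds on $Q(\tau,1)^{k-1}(\sgn(Q_{\tau_0}) - \erf(\cdots))$ extracted in the proofs of Proposition \ref{prop:converge} and Lemma \ref{lem:Psi1bound} — in particular the positive-definiteness of $D + 2Q_\tau^2$ from \eqref{eqn:Qposdef}, which gives a uniform Gaussian tail in $t$ allowing dominated convergence. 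A secondary point requiring care is the identification of $g_k$ with the integral: one should double-check the constant, but \eqref{eqn:gk'} and $g_k(0)=0$ from Lemma \ref{gkLemma} pin it down, and the substitution is routine. Everything else is bookkeeping.
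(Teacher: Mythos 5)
Your proposal is correct and follows essentially the same route as the paper: bound $\Psi_1$ via Lemma \ref{lem:Psi1bound} so that Lemma \ref{lem:holconverge} applies (using $k>3$), compute the coefficient integrals term by term, recognize the $\erf$ integral as $g_k\left(Q_\tau/\sqrt{D}\right)$ and the $\sgn$ integral via the gamma function, and conclude $\pi_{k+\frac{1}{2}}\left(\Psi_1\right)=-\Psi_2$ so that Lemma \ref{lem:rewrite} gives the claim. Your added care about interchanging the sum over $Q$ with the coefficient integral, and about idempotency for the final assertion, only makes explicit what the paper leaves implicit.
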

\begin{proof}
By \eqref{eqn:rewrite}, the lemma is equivalent to the statement that
$$
\pi_{k+\frac{1}{2}}\left(\Psi_1\right) = -\Psi_2.
$$

By Lemma \ref{lem:Psi1bound} and Lemma \ref{lem:holconverge}, we may apply holomorphic projection to $\Psi_1$ since $k>3$.  Using the definition \eqref{eqn:holprojformal} of holomorphic projection, we compute
\[
\pi_{k+\frac{1}{2}}\left(\Psi_1\right)(z)=\sum_{D\in\N} c_{D} e^{2\pi i Dz}
\]
with
\begin{multline*}
c_{D} =\frac{(4\pi D)^{k-\frac{1}{2}}}{\Gamma\left(k-\frac{1}{2}\right)}\int_0^\infty c_D(v) e^{-4\pi D v} v^{k-\frac{1}{2}}\frac{dv}{v}\\
=-\frac{(4\pi D)^{k-\frac{1}{2}}}{\Gamma\left(k-\frac{1}{2}\right)} \sum_{Q\in \QD}Q(\tau,1)^{k-1} \int_{0}^{\infty}\left(\sgn\left(Q_{\tau_0}\right) -\erf\left(2Q_{\tau}\sqrt{\pi v}\right)\right)e^{-4\pi D v}v^{k-\frac{1}{2}}\frac{dv}{v}.
\end{multline*}
We consider both integrals separately.  The first summand is evaluated immediately by using the integral representation of the gamma function and the result follows by the definition of $g_k$.
\end{proof}

We now prove Theorem \ref{thm:Thetaops}.
\begin{proof}[Proof of Theorem \ref{thm:Thetaops}]
Note that by Lemma \ref{lem:Thetakhp}
$$
L_z\left(\widehat{\Psi}(\tau,z)\right)=L_z\left(\Psi_1(\tau,z)\right),
$$
because $\pi_{k+\frac{1}{2}}(\Psi_1)$ is holomorphic as a function of $z$.  Hence \eqref{eqn:lowerz} follows directly by
$$
\frac{d}{dr}\erf(r) = \frac{2}{\sqrt{\pi}} e^{-r^2}.
$$

In order to prove \eqref{eqn:lowertau1}, we use Lemma \ref{lem:rewrite} and apply $L_{\tau}$ to $\Psi_1$ and $\Psi_2$.  Using the fact that
\begin{equation}\label{eqn:lowertau}
L_{\tau}\left(Q_{\tau}\right)=\frac{1}{2i}Q(\tau,1),
\end{equation}
one obtains
\[
L_{\tau}\left(\Psi_{1}\left(\tau,z\right)\right) = -y^{2k}\Thstr\left(\tau,z\right).
\]

Using \eqref{eqn:gk'} and \eqref{eqn:lowertau}, a short calculation using \eqref{eqn:Qrewrite} shows that
$$
L_{\tau}\left(\Psi_2(\tau,z)\right) =\frac{i y^{2k}}{\beta\left(k-\frac{1}{2},\frac{1}{2}\right)}\Omega(-\overline{\tau}, z).
$$

\end{proof}

We now use the modularity of $\Theta_1$ proven in Lemma \ref{lem:lowermodular} (1) to obtain Theorem \ref{thm:Thetamodular}.
\begin{proof}[Proof of Theorem \ref{thm:Thetamodular}]

By Lemma \ref{lem:lowermodular} (1), we have that 
$$
L_{z}\left(\Psi\right)\in \spc_{k-\frac{3}{2}}.
$$
Furthermore, by Lemma \ref{lem:Thetakhp}, we know that
$$
\pi_{k+\frac{1}{2}}\left(\widehat{\Psi}\right)=0.
$$
Now note that Lemma \ref{lem:holconverge} implies that $\pi_{k+\frac{1}{2}}\left(\Psi_1\right)$ satisfies the same growth conditions as $\Psi_1$.  Hence Lemma \ref{lem:Psi1bound} together with \eqref{eqn:rewritehp} implies that $\widehat{\Psi}$ satisfies the growth conditions necessary to apply Proposition \ref{prop:holproj} and we conclude that $\widehat{\Psi}$ is modular of weight $k+\frac{1}{2}$ in $z$.   Recalling that $k$ is even, a direct inspection of the Fourier expansion yields that Kohnen's plus space condition is satisfied.

The modularity in $\tau$ follows by the same changes of variables given in the proof of Lemma \ref{lem:lowermodular}.  To complete the proof, we note that the function $\widehat{\Psi}$ is real analytic due to the definitions \eqref{eqn:Psi1def} and \eqref{eqn:Psi2def} in the representation \eqref{eqn:rewrite}.

\end{proof}


\begin{thebibliography}{99}
\bibitem{AndrewsMock} G. Andrews, \begin{it}Partitions, Durfee symbols, and the Atkin--Garvan moments of ranks\end{it}, Invent. Math. \begin{bf}169\end{bf} (2007), 37--73.
\bibitem{Bringmann} K. Bringmann, \begin{it}On the explicit construction of higher deformations of partition statistics\end{it}, Duke Math. J. \begin{bf}144\end{bf} (2008), 195--233.
\bibitem{BKW} K. Bringmann, B. Kane, and W. Kohnen, \begin{it}Locally harmonic Maass forms and rational period functions\end{it}, submitted for publication.
\bibitem{BGM} K. Bringmann, F. Garvan, and K. Mahlburg, \begin{it}Partition statistics and quasiharmonic Maass forms\end{it}, Int. Math. Res. Not. \begin{bf}2009\end{bf} (2009), 63--97.
\bibitem{BOfq}  K. Bringmann and K. Ono, \begin{it}The $f(q)$ mock theta function conjecture and partition ranks\end{it}, Invent. Math. \begin{bf}165\end{bf} (2006), 243--266.
\bibitem{BODual} K. Bringmann and K. Ono, \begin{it}Arithmetic properties of coefficients of half-integral weight Maass-Poincar\'e series\end{it}, Math. Ann. \begin{bf}337\end{bf} (2007), 591--612.
\bibitem{BORank} K. Bringmann and K. Ono, \begin{it}Dyson's ranks and Maass forms\end{it}, Ann. of Math. \begin{bf}171\end{bf} (2010), 419--449.
\bibitem{BIF1} J. Bruinier, J. Funke, and \"O. Imamo$\overline{\text{g}}$lu, \begin{it}Regularized theta liftings and periods of modular functions,\end{it} preprint.
\bibitem{BIF2} J. Bruinier, J. Funke, and \"O. Imamo$\overline{\text{g}}$lu, \begin{it}Rational period functions, singular automorphic forms, and
theta lifts,\end{it} in preparation.
\bibitem{BrO} J. Bruinier and K. Ono, \begin{it}Heegner divisors, L-functions, and Maass forms\end{it}, Ann. of Math. \begin{bf}172\end{bf} (2010), 2135--2181.
\bibitem{BrY} J. Bruinier and T. Yang, \begin{it}Faltings heights of CM cycles and derivatives of L-functions\end{it}, Invent. Math. \begin{bf}177\end{bf} (2009), 631--681.
\bibitem{DoiNaganuma}
K. Doi and H. Naganuma, \begin{it}On the algebraic curves uniformized by arithmetical automorphic functions\end{it}, Ann. of Math. \begin{bf}86\end{bf} (1967), 449--460.
\bibitem{EOT} T. Eguchi, H. Ooguri, and Y. Tachikawa, \begin{it}Notes on the $K3$ surface and the Mathieu group $M_{24}$\end{it}, Exper. Math. \begin{bf}20\end{bf} (2011), 91--96.
\bibitem{Hua} L. Hua, Harmonic analysis of functions of several complex variables in the classical domains, American Mathematical Society, 1980.
%\bibitem{Kohnen}W. Kohnen, \begin{it}Modular forms of half-integral weight on $\Gamma_0(4)$,\end{it} Math. Ann. \textbf{248} (1980), 249--266.
\bibitem{Klingen}H. Klingen, Introductory lectures on Siegel modular forms, Cambridge University Press, 1990.
\bibitem{KohnenZagier} W. Kohnen and D. Zagier, \begin{it}Values of $L$-series of modular forms at the center of the critical strip\end{it} \textbf{64} (1981), 175--198.
\bibitem{Lipschitz}R. Lipschitz, \begin{it}Untersuchung der Eigenschaften einer Gattung von unendlichen Reihen\end{it}, J. Reine und Angew. Math \begin{bf}105\end{bf} (1889), 127--156.
\bibitem{OnoUnearth}K. Ono, \begin{it}Unearthing the visions of a master: harmonic Maass forms and number theory,\end{it} Proceedings of the 2008 Harvard--MIT Current Developments in Mathematics Conference, International Press, Somerville, MA (2009), 347--454.
\bibitem{Shintani}  T. Shintani, \begin{it}On construction of holomorphic cusp forms of half integral weight\end{it}, Nagoya Math. J. \begin{bf}58\end{bf} (1975), 83--126.
\bibitem{Sturm}J. Sturm, \begin{it}Projections of $C^{\infty}$ automorphic forms,\end{it} Bull. Amer. Math. Soc. \begin{bf}2\end{bf} (1980), 435--439.
\bibitem{Vigneras} M. Vign\'eras, \begin{it}S\'eries theta des formes quadratiques ind\'efinies\end{it} in: Modular functions of one variable VI, Springer lecture notes \begin{bf}627\end{bf} (1977), 227--239.
\bibitem{ZagierDN} D. Zagier, \begin{it}Modular forms associated to real quadratic fields\end{it}, Invent. Math. \begin{bf}30\end{bf} (1975), 1--46.
\bibitem{ZagierLang}D. Zagier, \begin{it}The Eichler-Selberg trace formula on $\SL_2(\Z)$\end{it}, Appendix to S. Lang, Introduction to modular forms, Grundlehren d. math. Wiss. \begin{bf}222,\end{bf} Springer-Verlag, Berlin (1976), 44--54.
\bibitem{ZagierIntro} D. Zagier, \begin{it}Introduction to modular forms\end{it} in From number theory to Physics, Springer-Verlag (1992), 238--291.
\bibitem{ZaDual} D. Zagier, \begin{it}Traces of singular moduli\end{it} in Motives, Polylogarithms and Hodge Theory, Part I, International Press Lecture Series (Eds. F. Bogomolov and L. Katzarkov), International Press (2002), 211--244.
\bibitem{ZagierMellin}D. Zagier, \begin{it}The Mellin transform and other useful analytic techniques,\end{it}  Appendix to E. Zeidler, Quantum Field Theory I: Basics in Mathematics and Physics.  A Bridge Between Mathematicians and Physicists,   Springer-Verlag, Berlin-Heidelberg-New York (2006), 305-323.
\bibitem{ZagierBourbaki}D. Zagier, \begin{it}Ramanujan's mock theta functions and their applications,\end{it} S\'eminaire Bourbaki, Ast\'erique \begin{bf}326\end{bf} (2009), 143--164.
\bibitem{ZwegersThesis} S. Zwegers, \begin{it}Mock theta functions\end{it}, Ph.D. thesis, Utrecht University (2002).
\end{thebibliography}
\end{document}